\newtheorem{thm}{Theorem}
\newtheorem{prp}{Proposition}
\newtheorem{lem}{Lemma}
\newtheorem{cor}{Corollary}
\newtheorem{conj}{Conjecture}
\newtheorem{obs}{Observation}
\theoremstyle{definition}
\newtheorem{rem}{Remark}
\newtheorem{example}{Example}
\DeclareMathOperator{\Des}{Des}
\DeclareMathOperator{\Asc}{Asc}
\DeclareMathOperator{\des}{des}
\DeclareMathOperator{\col}{col}
\newcommand{\el}[2]{\genfrac{\langle}{\rangle}{0pt}{}{#1}{#2}}
\title{A two-sided analogue of the Coxeter complex}
\author[T. K. Petersen]{T. Kyle Petersen}
\thanks{The author was supported by a Simons Foundation collaboration grant.}
\address{Department of Mathematical Sciences, DePaul University, Chicago, IL, USA}
\email{tpeter21@depaul.edu}
\urladdr{http://www.math.depaul.edu/\~{}tpeter21}
\keywords{Coxeter group, Coxeter complex, Eulerian polynomial, contingency table}
\begin{document}

\maketitle

\begin{abstract}
For any Coxeter system $(W,S)$ of rank $n$, we introduce an abstract boolean complex (simplicial poset) of dimension $2n-1$ that contains the Coxeter complex as a relative subcomplex. Faces are indexed by triples $(I,w,J)$, where $I$ and $J$ are subsets of the set $S$ of simple generators, and $w$ is a minimal length representative for the parabolic double coset $W_I w W_J$. There is exactly one maximal face for each element of the group $W$. The complex is shellable and thin, which implies the complex is a sphere for the finite Coxeter groups. In this case, a natural refinement of the $h$-polynomial is given by the ``two-sided" $W$-Eulerian polynomial, i.e., the generating function for the joint distribution of left and right descents in $W$. 
\end{abstract}

\section{Introduction}

Coxeter groups were developed to study symmetries of regular polytopes, and they play a major role in the study of Lie algebras (the Weyl group of a root system is a Coxeter group). The Coxeter complex is a simplicial complex associated with the reflection representation of the group, but which can also be defined abstractly via cosets of parabolic subgroups. The goal of this note is to provide a ``two-sided" analogue of the Coxeter complex by considering double cosets of parabolic subgroups. 

Before turning to the new construction, let us recall some definitions and important properties of the usual Coxeter complex. We assume the reader has some familiarity with the study of Coxeter groups. See Humphreys' book \cite{Humphreys} or Bj\"orner and Brenti's book \cite{BjB} for background.

Fix a finitely generated Coxeter system $(W,S)$, and let $W_J$ denote the standard parabolic subgroup generated by a subset of simple generators $J \subseteq S$. It is well known that the set of cosets of parabolic subgroups forms an abstract simplicial complex known as the \emph{Coxeter complex}, and denoted by
\[
 \Sigma = \Sigma(W,S) = \{ wW_J : w \in W, J\subseteq S\}.
\]
The faces of $\Sigma$ are ordered by reverse inclusion of cosets, i.e.,
\[
 wW_J \leq_{\Sigma} w'W_{J'} \quad \mbox{ if and only if } \quad  wW_J \supseteq w'W_{J'}.
\]
Note that this means maximal elements are singleton sets: $w W_{\emptyset} = \{w\}$, and there is a unique minimal element: $w W_S = W$. Some well-known features of the Coxeter complex are highlighted in the following result. Most of these statements can be found in work of Bj\"orner \cite{Bjorner2} (see also Abramenko and Brown \cite[Chapter 3]{AbramenkoBrown}), though some of these facts were known earlier. See, e.g., Bourbaki \cite{Bourbaki}.

\begin{thm}\label{thm:Coxeter}
For any Coxeter system $(W,S)$ with $|S|=n<\infty$ we have the following.
\begin{enumerate}
\item The Coxeter complex $\Sigma$ is a balanced simplicial complex of dimension $n-1$.
\item The facets (maximal faces) of $\Sigma$ are in bijection with the elements of $W$.
\item The Coxeter complex is shellable and any linear extension of the weak order on $W$ gives a shelling order for $\Sigma$.
\item If $W$ is infinite then $\Sigma$ is contractible.
\item If $W$ is finite,
\begin{enumerate}
\item the geometric realization of $\Sigma$ is a sphere, and
\item the $h$-polynomial of $\Sigma$ is the $W$-Eulerian polynomial,
\[
 h(\Sigma;t) = \sum_{w \in W} t^{\des(w)},
\]
where $\des(w)$ denotes the number of descents of the element $w$.
\end{enumerate}
\end{enumerate}
\end{thm}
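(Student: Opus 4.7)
The plan is to treat each part using classical facts about parabolic subgroups and their minimum-length coset representatives. For parts (1) and (2), I would color each face $wW_J$ by its complementary set $S\setminus J$. A facet $\{w\}=wW_\emptyset$ then has exactly one subface of each color, yielding both the balanced structure and the dimension $n-1$; the bijection between facets and group elements is the tautology $\{w\}\leftrightarrow w$.

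For part (3), fix a linear extension $w_1,w_2,\ldots$ of the right weak order (equivalently, any total order refining length). The central combinatorial fact I would invoke is that $w$ is the minimum-length representative of the coset $wW_J$ if and only if $\Des(w)\cap J=\emptyset$. It follows that a boundary face $wW_J$ of $\{w\}$ lies in some earlier facet exactly when $J\cap\Des(w)\ne\emptyset$, so the faces of $\{w\}$ not appearing earlier are precisely those of the form $wW_J$ with $J\subseteq S\setminus\Des(w)$. This family has a unique smallest-dimensional element, the face
\[
\mathcal{R}(\{w\}) := wW_{S\setminus\Des(w)},
\]
which we take as the shelling restriction, confirming that the given ordering is a shelling.

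Part (4) is then a formal consequence: when $W$ is infinite, no element satisfies $\Des(w)=S$, so $\mathcal{R}(\{w\})$ is always a proper subface of $\{w\}$, and each attachment in the shelling is a deformation retract starting from the contractible identity facet, yielding contractibility of $\Sigma$. For part (5)(a), I would check that $\Sigma$ is thin---each codimension-one face $wW_{\{s\}}=\{w,ws\}$ lies in exactly the two facets $\{w\}$ and $\{ws\}$---and conclude via the standard result that a thin shellable complex is a sphere. Part (5)(b) comes from the general formula $h_k(\Sigma)=\#\{F:|\mathcal{R}(F)|=k\}$ for shellable complexes, together with the observation $|\mathcal{R}(\{w\})|=|\Des(w)|=\des(w)$, which immediately identifies the $h$-polynomial with the $W$-Eulerian polynomial.

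The principal obstacle is verifying the restriction formula in part (3); this rests entirely on the parabolic coset fact that $\ell(wu)=\ell(w)+\ell(u)$ for all $u\in W_J$ precisely when $\Des(w)\cap J=\emptyset$. Once this is in hand, the remaining parts follow from standard combinatorial and topological machinery for shellable complexes.
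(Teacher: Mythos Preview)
The paper does not actually prove Theorem~\ref{thm:Coxeter}; it is stated as classical background and attributed to Bj\"orner \cite{Bjorner2} and others. Your sketch is essentially Bj\"orner's argument, and it is correct: the coloring by $S\setminus J$, the identification of the restriction face as $wW_{S\setminus\Des(w)}$, the thinness check, and the computation of the $h$-polynomial from the shelling are all standard and match what the cited references do. The paper later carries out the exact analogue of your argument for its two-sided complex $\Xi$ (see Sections~\ref{sec:defn}--\ref{sec:faces}), so your outline aligns well with the paper's methodology.

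One small imprecision: in part~(3) you write that a linear extension of right weak order is ``equivalently, any total order refining length.'' These are not the same as orderings---many length-compatible total orders are not linear extensions of weak order. What is true, and what your argument actually shows, is that \emph{any} length-compatible total order gives a shelling, since the key step only uses that the minimal-length representative of $wW_J$ precedes all other elements of that coset. So your proof is slightly stronger than stated, but the word ``equivalently'' should be dropped or replaced.
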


We will try to emulate all these properties for a ``two-sided" version of the Coxeter complex, denoted $\Xi = \Xi(W,S)$. While we will defer the definition of $\Xi$ to Section \ref{sec:defn}, let us comment on one matter. Although its faces are related to parabolic double cosets $W_IwW_J$, $\Xi$ is \emph{not} merely the set of such cosets ordered by inclusion. (See Remark \ref{rem:distinct}.)
Our main results are summarized as follows.

\begin{thm}\label{thm:main}
For any Coxeter system $(W,S)$ with $|S|=n<\infty$, we have the following.\begin{enumerate}
\item The complex $\Xi$ is a balanced boolean complex of dimension $2n-1$.
\item The facets (maximal faces) of $\Xi$ are in bijection with the elements of $W$, and the Coxeter complex $\Sigma$ is a relative subcomplex of $\Xi$.
\item The complex $\Xi$ is shellable and any linear extension of the two-sided weak order on $W$ gives a shelling order for $\Xi$.
\item If $W$ is infinite then $\Xi$ is contractible.
\item If $W$ is finite,
\begin{enumerate}
\item the geometric realization of $\Xi$ is a sphere, and
\item a refined $h$-polynomial of $\Xi$ is the two-sided $W$-Eulerian polynomial,
\[
 h(\Xi;s,t) = \sum_{w \in W} s^{\des_L(w)}t^{\des_R(w)},
\]
where $\des_L(w)$ denotes the number of left descents of $w$ and $\des_R(w)$ denotes the number of right descents of the element $w$.
\end{enumerate}
\end{enumerate}
\end{thm}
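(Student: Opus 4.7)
My plan is to work through the five parts of the theorem in order, paralleling the outline of Theorem~\ref{thm:Coxeter} but replacing left cosets with double cosets and (right) descents with pairs of (left, right) descents.

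For part (1), the boolean structure on triples $(I,w,J)$ is determined by its covering relations: $(I,w,J)$ is covered by the triples $(I',w',J')$ with $|I'|+|J'|=|I|+|J|-1$ obtained by deleting a single generator from $I$ or from $J$ and passing to the unique minimal length representative of the resulting enlarged double coset. Standard results on minimal length double coset representatives identify the interval below any face $(I,w,J)$ with the boolean lattice on $I \sqcup J$, giving the claimed dimension $2n-1$ and the boolean (simplicial poset) property. Balancing follows by coloring each vertex by the single element of $S \sqcup S$ that distinguishes it from an ambient facet.

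For parts (2) and (3), the facets are the triples $(\emptyset,w,\emptyset)$, which provides the bijection with $W$; the Coxeter complex embeds as the relative subcomplex of faces of the form $(\emptyset, u, J)$ via $w W_J \mapsto (\emptyset, u, J)$ with $u$ the minimal length representative of $wW_J$. To establish shellability along a linear extension $w_1, w_2, \ldots$ of the two-sided weak order, the central computation is to show that the intersection of the facet $F_{w_k} = (\emptyset,w_k,\emptyset)$ with $F_{w_1} \cup \cdots \cup F_{w_{k-1}}$ is exactly the pure codimension-one subcomplex generated by those codimension-one faces of $F_{w_k}$ obtained by enlarging the empty set on the left by some $s \in \Des_L(w_k)$ or on the right by some $s \in \Des_R(w_k)$. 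This requires two-sided analogues of the standard exchange/descent arguments and identifies the restriction set of $F_{w_k}$ with $\Des_L(w_k) \sqcup \Des_R(w_k)$.

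For parts (4) and (5), once the shelling is in place the rest follows quickly. Thinness of $\Xi$ amounts to checking that each codimension-one triple $(I,w,J)$ with $|I|+|J|=2n-1$ corresponds to a double coset containing exactly two group elements, hence is a face of exactly two facets; combined with pure shellability this yields the sphere of part (5)(a). The refined $h$-polynomial then equals $\sum_{w\in W} s^{|\Des_L(w)|} t^{|\Des_R(w)|}$, the two-sided $W$-Eulerian polynomial of part (5)(b), directly from the restriction-set computation of the shelling. When $W$ is infinite, no facet has full restriction set (an infinite $W$ has elements of arbitrarily large length, but each specific $w$ has $\Des_L(w) \sqcup \Des_R(w) \subsetneq S \sqcup S$), so each facet is attached along a contractible subcomplex in the shelling order, forcing $\Xi$ to be contractible, which is part (4).

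The main obstacle I expect is the shelling verification in part (3): controlling how the minimal length representative of $W_I w_k W_J$ varies as $I$ and $J$ grow, and correctly identifying which earlier facets $F_{w_j}$ share each codimension-one face with $F_{w_k}$, requires a careful two-sided extension of the descent-and-exchange argument that underlies the proof for $\Sigma$. Once that is carried through, thinness, sphericity, contractibility, and the $h$-polynomial identity all follow without significant additional effort.
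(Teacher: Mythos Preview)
Your approach mirrors the paper's throughout, so the overall plan is sound. Two slips in part~(1) need correcting: the interval below $(I,w,J)$ is the boolean lattice on $(S{-}I)\sqcup(S{-}J)$, not on $I\sqcup J$ (under your version a facet $(\emptyset,w,\emptyset)$ would sit over a single point rather than a $(2n{-}1)$-simplex); and deleting a generator from $I$ \emph{shrinks} the double coset rather than enlarging it. The uniqueness of the minimal representative is what makes \emph{downward} covers canonical (add $s$ to $I$ or $J$, enlarge the coset, take the unique minimal element), and it is this uniqueness that forces lower intervals to be boolean. Upward covers are generally not unique, which is precisely why $\Xi$ is only a simplicial poset and not a simplicial complex.

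You also overestimate the difficulty of part~(3). The paper reduces the shelling verification to the single observation that if $(I,u,J)\leq_{\Xi}(I',v,J')$ then $u\leq_{LR} v$, which is immediate from Lemma~\ref{lem:minrep} since $v\in W_I u W_J$. From this it follows at once that the faces of $F_{w}$ already seen in earlier facets are exactly those not represented by $w$, and the maximal such faces are the codimension-one faces indexed by $\Des_L(w)\sqcup\Des_R(w)$. No delicate two-sided exchange argument is required; the restriction set falls out directly, and with it parts~(4), (5a), and (5b) exactly as you outline.
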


The main contrasts between $\Xi$ and $\Sigma$ lie in the fact that $\Xi$ is roughly twice the dimension of $\Sigma$ and in the fact that $\Xi$ is not a simplicial complex. While all the faces of $\Xi$ are simplices, many of these simplices share the same vertex set. Even for the rank one Coxeter group $A_1$, $\Xi$ is realized by two edges whose endpoints are paired off to form a circle: \begin{tikzpicture} \draw (0,0) circle (5pt); \draw (-.2,0) node[circle,fill=black,inner sep=1] {}; \draw (.2,0) node[circle,fill=black,inner sep=1] {};\end{tikzpicture}.

We remark that our approach in this work is combinatorial, not geometric. There are two different approaches to proving the topological results for the Coxeter complex listed in Theorem \ref{thm:Coxeter}. One way (following Bourbaki \cite{Bourbaki}) is to study the reflection hyperplane arrangement for the Coxeter group. For example, in the finite case, intersecting this arrangement with a sphere realizes the Coxeter complex. 
Thus in this situation the topology of the Coxeter complex is manifest in the ambient space. On the other hand, Bj\"orner showed in  \cite{Bjorner2} how to use poset-theoretic tools to study the topology of the complex with only the abstract definition of the face poset. 

The approach of this paper mirrors that of Bj\"orner. We define the face poset of $\Xi$ abstractly, and use Bj\"orner's techniques to deduce Theorem \ref{thm:main}. We hope to uncover a more geometric description of $\Xi$ in the future.

The paper is structured as follows. 

The first few sections introduce $\Xi$ and establish the various parts of our main theorem. In Section \ref{sec:defn} we provide the definition of $\Xi$ and the proof of parts (1) and (2) of Theorem \ref{thm:main}. In Section \ref{sec:topology} we prove parts (3), (4), and (5a) of Theorem \ref{thm:main}. Section \ref{sec:faces} discusses face enumeration in the case of finite groups $W$, and establishes part (5b) of Theorem \ref{thm:main}.

In Section \ref{sec:poly} we define, for any finite Coxeter group $W$, the ``two-sided" Eulerian polynomials
\[
 W(s,t) := \sum_{w \in W} s^{\des_L(w)} t^{\des_R(w)}.
\]
These polynomials have pleasant properties and we offer a generalization of a conjecture of Gessel that asserts that these polynomials expand positively in the basis
\[
 \{ (st)^a(s+t)^b(1+st)^{n-2a-b} \}_{0\leq 2a+b \leq n},
\]
where $n$ is the rank of the group. See Conjecture \ref{conj:gamma}.

Finally in Section \ref{sec:contingency} we discuss a combinatorial model for faces of $\Xi$ in the case of the Coxeter group of type $A_{n-1}$, i.e., the symmetric group $S_n$. Here the faces of $\Xi$ can be encoded by two-way \emph{contingency tables}. These tables are nonnegative integer arrays whose entries sum to $n$ and whose row and column sums are positive. The partial order on faces in this case is simply refinement ordering on contingency tables. Maximal tables are permutation matrices and the minimal element is the unique one-by-one array. Such arrays were studied by Diaconis and Gangolli \cite{DG}, but not this partial ordering on the arrays.

The author would like to thank John Stembridge for helpful conversations that inspired the idea for this paper, and Vic Reiner for comments on an earlier draft.

\section{A two-sided Coxeter complex}\label{sec:defn}

Throughout this section we assume familiarity with basic Coxeter group concepts and terminology. We mostly follow the definitions and notational conventions of \cite{BjB} and \cite{Humphreys}.

Fix a Coxeter system $(W,S)$ with $|S|=n$. We call the elements $s \in S$ the \emph{simple generators} of $W$. 
Every element $w  \in W$ can be written as a product of elements in $S$, $w=s_1\cdots s_k$, and if this expression is minimal, we say the \emph{length} of $w$ is $k$, denoted $\ell(w) = k$. An expression of minimal length is called a \emph{reduced expression}.

Recall that a \emph{cover relation} in a partially ordered set (``poset" for short) is a pair $x < y$ such that if $x\leq z \leq y$, then $x=z$ or $z=y$. A partial ordering of a set can be defined as the transitive closure of its cover relations. One important partial order on $W$ is known as the \emph{weak order}. The weak order comes in two equivalent types: ``left" and ``right" weak order. We will also have reason to mention the ordering obtained from the union of the covers in left weak order and right right weak order, which we call the ``two-sided" weak order. We now describe these orderings in terms of their cover relations. 
\begin{itemize} 
\item The \emph{left weak order} on $W$ says $v$ covers $u$ if and only if $\ell(v) = \ell(u)+1$ and $u^{-1}v \in S$. 
\item The \emph{right weak order} on $W$ says $v$ covers $u$ if and only if $\ell(v) = \ell(u)+1$ and $vu^{-1} \in S$.
\item The \emph{two-sided weak order} on $W$ says $v$ covers $u$ if and only if $\ell(v) = \ell(u)+1$ and $vu^{-1}$ or $u^{-1}v$ is in $S$.
\end{itemize}

The left and the right weak order are obviously subposets of the two-sided weak order. We write $u \leq_L v$ if $u$ is below $v$ in the left weak order, we write $u\leq_R v$ if $u$ is below $v$ in the right weak order, and we write $u \leq_{LR} v$ if $u$ is below $v$ in the two-sided weak order. The identity is the unique minimum in these partial orderings. When $W$ is finite, there is also a unique maximal element denoted $w_0$, and each poset is self-dual, i.e., isomorphic to its reverse ordering.

Though will do not use the fact, we mention that all three of these posets are subposets of the strong Bruhat order on $W$, whose covers have $u^{-1}v$ or $vu^{-1}$ equal to a conjugate of an element of $S$. 

The \emph{left (resp. right) descent set} of an element $w$ is the set of all simple generators that take us down in left (resp. right) weak order when multiplied on the left (resp. right). We denote the left and right descent sets by $\Des_L(w)$ and $\Des_R(w)$, respectively, i.e.,
\[
 \Des_L(w) = \{ s \in S : \ell(sw) < \ell(w) \} \mbox{ and }  \Des_R(w) = \{ s \in S : \ell(ws) < \ell(w) \}.
\]
We define the corresponding \emph{ascent sets} as the complements of the descent sets in $S$:
\[
 \Asc_L(w) = S-\Des_L(w)=\{ s \in S : \ell(sw) > \ell(w) \},
\]
and
\[
 \Asc_R(w) = S- \Des_R(w) = \{ s \in S : \ell(ws) > \ell(w)\}.
\]

Intuitively, we move up and down in left (resp. right) weak order by multiplying elements on the left (resp. right) by simple generators. We move up and down in the two-sided weak order by multiplying on either side by simple generators. For Bruhat order, we move up and down by inserting simple generators anywhere in a given reduced expression. 

Suppose $J$ is a subset of simple generators, $J \subseteq S$, and let $W_J$ denote the group generated by the elements of $J$, i.e., $W_J = \langle s : s \in J\rangle$. This group is a Coxeter group in its own right, and we call such a subgroup a \emph{standard parabolic subgroup}. The Coxeter complex arises when considering the quotients of the form $W/W_J$. That is, the faces of the Coxeter complex are identified with left cosets of parabolic subgroups, $wW_J$. To be precise, let
\[
\Sigma = \bigcup_{J \subseteq S} W/W_J = \{ wW_J : w \in W, J \subseteq S \}.
\]
We partially order the elements of $\Sigma$ by reverse containment of sets, i.e., by declaring
\[
 wW_J \leq_{\Sigma} w'W_{J'},
\]
if and only if
\[
 wW_J \supseteq w'W_{J'}.
\]
The dimension of a face $wW_J$ is given by $\dim(wW_J) = |S-J|-1$, so that vertices correspond to cosets of the form $wW_{S-\{s\}}$, and maximal faces are singleton cosets of the form $wW_{\emptyset} = \{w\}$.

For our two-sided analogue, we consider elements from all double quotients $W_I\backslash W /W_J$, so the faces will be related to double cosets of parabolic subgroups $W_I w W_J$, where $I$ and $J$ are subsets of $S$. However, the faces of $\Xi$ are \emph{not} simply the double cosets of this form. See Remark \ref{rem:distinct}.

An essential fact about cosets of parabolic subgroups is that each coset $wW_J$ has a unique element of minimal length, call it $u$, such that $J \subseteq \Asc_R(u)$, or $\Des_R(u) \subseteq S-J$. In fact, the same is true for double cosets, and we record this in the following lemma, which can be found in \cite[Chapter 4, Exercise 1.3]{Bourbaki}.

\begin{lem}\label{lem:minrep}
Each double coset $W_I w W_J$ has a unique element of minimal length, call it $u$, such that
\[
 \Des_L(u) \subseteq S - I \quad \mbox{ and } \quad \Des_R(u) \subseteq S-J,
\]
or
\[
 I \subseteq \Asc_L(u) \quad \mbox{ and } \quad J \subseteq \Asc_R(u).
\]
Moreover, for each $v \in W_I w W_J$, $u$ is below $v$ in the two-sided weak order: $u \leq_{LR} v$. 
\end{lem}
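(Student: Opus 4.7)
The plan has three parts: establish existence by taking a minimum-length element and checking descent conditions, deduce uniqueness from a length-additivity property for parabolic double cosets, and build an explicit saturated chain to obtain the two-sided weak order statement. For existence, pick any $u \in W_I w W_J$ of minimum length. For each $s \in I$ the element $su$ also lies in $W_I w W_J$, so $\ell(su) \geq \ell(u)$ and therefore $s \in \Asc_L(u)$; hence $I \subseteq \Asc_L(u)$, equivalently $\Des_L(u) \subseteq S - I$. Symmetrically $\Des_R(u) \subseteq S - J$.

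The key technical ingredient is the classical length-additivity of minimum-length coset representatives (see, e.g., \cite{BjB,Bourbaki}): if $I \subseteq \Asc_L(u)$, then $\ell(au) = \ell(a) + \ell(u)$ for every $a \in W_I$, with the symmetric statement on the right. I would upgrade this to the two-sided claim $\ell(aub) = \ell(a) + \ell(u) + \ell(b)$ for all $a \in W_I$, $b \in W_J$ by a short induction on $\ell(a)$: the point to verify is that $J \subseteq \Asc_R(au)$ whenever $J \subseteq \Asc_R(u)$ and $a \in W_I$, which is forced by the exchange condition (a putative $s \in J \cap \Des_R(au)$ would, via exchange applied to a length-additive reduced expression for $au$, manufacture either some $s' \in I \cap \Des_L(u)$ or $s \in J \cap \Des_R(u)$, contradicting the hypotheses on $u$). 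Uniqueness is then immediate: if $u'$ is any other element of $W_I w W_J$ satisfying both descent conditions, write $u' = aub$ with $a \in W_I$, $b \in W_J$, so two-sided additivity gives $\ell(u') = \ell(a) + \ell(u) + \ell(b) \geq \ell(u)$; the symmetric computation $u = a^{-1} u' b^{-1}$ yields $\ell(u) \geq \ell(u')$. Hence $\ell(a) = \ell(b) = 0$, so $a = b = e$ and $u = u'$.

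For the two-sided weak order statement, given $v \in W_I w W_J$ decompose $v = aub$ with $\ell(v) = \ell(a) + \ell(u) + \ell(b)$, and fix reduced expressions $a = s_1 \cdots s_p$ with each $s_i \in I$ and $b = t_1 \cdots t_q$ with each $t_j \in J$. Then
\[
 u,\; ut_1,\; ut_1 t_2,\; \ldots,\; ut_1\cdots t_q,\; s_p u t_1\cdots t_q,\; \ldots,\; s_1\cdots s_p u t_1\cdots t_q = v
\]
is a saturated chain in the two-sided weak order: by the two-sided length-additivity, consecutive members differ by a single simple generator multiplied on one side and the length increases by exactly one at each step. Thus $u \leq_{LR} v$, as required. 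The chief obstacle in this plan is the two-sided length-additivity $\ell(aub) = \ell(a) + \ell(u) + \ell(b)$; the one-sided versions are standard, but combining them requires verifying that each descent condition is preserved under multiplication on the opposite side, which genuinely uses the exchange condition rather than only the defining length inequalities.
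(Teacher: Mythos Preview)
The paper does not prove this lemma; it simply records the statement and cites \cite[Chapter 4, Exercise 1.3]{Bourbaki}. So there is no paper argument to compare against directly. Your plan, however, has a genuine gap.

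The two-sided additivity you claim, namely $\ell(aub) = \ell(a) + \ell(u) + \ell(b)$ for \emph{all} $a \in W_I$ and $b \in W_J$, is false. Take any $s \in S$, set $I = J = \{s\}$ and $u = e$; both descent conditions hold trivially, yet with $a = b = s$ one gets $aub = e$ and $0 \neq 2$. Your exchange-condition sketch breaks exactly here: if $s \in J \cap \Des_R(au)$ and exchange deletes a letter from the $a$-portion of a reduced word for $au$, the only conclusion you can draw is $u s u^{-1} \in W_I$, which is no contradiction (in the counterexample $u s u^{-1} = s$ and $u = e$ has no descents at all). What \emph{is} true, and what the Bourbaki exercise actually asserts, is the existential version: every $v \in W_I u W_J$ admits \emph{some} decomposition $v = aub$ with $\ell(v) = \ell(a) + \ell(u) + \ell(b)$. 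That statement needs a different induction (on $\ell(v)$, peeling off a descent of $v$ lying in $I$ or in $J$), and once established it carries both your uniqueness argument and your saturated chain for $u \leq_{LR} v$ with only cosmetic changes: in the uniqueness step you can no longer reuse the specific pair $a^{-1}, b^{-1}$ for the reverse inequality, but since $u'$ also satisfies both descent conditions, a fresh additive decomposition of $u$ relative to $u'$ gives $\ell(u) \geq \ell(u')$ just as well.
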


Let ${}^I W^J$ denote the set of minimal representatives for $W_I\backslash W/W_J$, i.e.,
\[
{}^I W^J =\{ w \in W : I \subseteq \Asc_L(w) \mbox{ and } J \subseteq \Asc_R(w) \}.
\]
If $I = \emptyset$ we have ${}^{\emptyset} W^J = W^J$ is the set of left coset representatives.
 
With the lemma in mind, we could just as easily replace the cosets $wW_J$ in the definition of $\Sigma$ with pairs $(w,J)$ such that $w \in W^J$, i.e.,
\[\Sigma \cong \{ (w,J) : J \subseteq S, w \in W^J\}.\]
Extending this idea, we make the following definition.

Let
\[
\Xi = \{ (I, w, J) : I, J \subseteq S \mbox{ and } w \in {}^I W^J \}.
\]
 We partially order the elements of $\Xi$ by reverse inclusion of the index sets $I$ and $J$ as well as the corresponding double coset, i.e., 
\[
 (I, w, J) \leq_{\Xi} (I',w',J') \quad \mbox{ if and only if } \quad \begin{cases}
 I \supseteq I',\\
 J \supseteq J', \mbox{ and}\\
 W_IwW_J \supseteq W_{I'}wW_{J'}.
\end{cases}
\]
We will refer to the $\Xi$ as the \emph{two-sided Coxeter complex}.

\begin{example}
In Figure \ref{fig:A2} we see the poset of faces of the two-sided Coxeter complex $\Xi(A_2)$. Faces are written as triples $(I,w,J)$, where $I, J \subseteq \{s_1,s_2\}$. We write only the subscripts for brevity, e.g., $(\{s_1\},e,\{s_1,s_2\})$ is written $(1,e,12)$.
\end{example}

\begin{figure}
{\small
\[
 \begin{tikzpicture}[xscale=.6,yscale=2.5]
 \tikzstyle{state}=[fill=white,inner sep = 2,scale=.6];
 \tikzstyle{linet}=[line width=5, cap=round, color=white!80!black];
  \draw (0,0) node[state] (12e12) {$(12,e,12)$};
  \draw (-6,1) node[state] (1e12) {$(1,e,12)$};
  \draw (-2,1) node[state] (2e12) {$(2,e,12)$};
  \draw (6,1) node[state] (12e2) {$(12,e,2)$};
  \draw (2,1) node[state] (12e1) {$(12,e,1)$};
  \draw (-9,2) node[state] (e12) {$(\emptyset,e,12)$};
  \draw (-7,2) node[state] (1e2) {$(1,e,2)$};
  \draw (-5,2) node[state] (1e1) {$(1,e,1)$};
  \draw (-3,2) node[state] (2e2) {$(2,e,2)$};
  \draw (-1,2) node[state] (2e1) {$(2,e,1)$};
  \draw (1,2) node[state] (12e) {$(12,e,\emptyset)$};
  \draw (3,2) node[state] (2s12) {$(2,s_1,2)$};
  \draw (5,2) node[state] (1s21) {$(1,s_2,1)$};
  \draw (7.25,2) node[state] (2s1s21) {$(2,s_1s_2,1)$};
  \draw (9.5,2) node[state] (1s2s12) {$(1,s_2s_1,2)$};
  \draw (-11,3) node[state] (e2) {$(\emptyset,e,2)$};
  \draw (-9,3) node[state] (e1) {$(\emptyset,e,1)$};
  \draw (-7,3) node[state] (1e) {$(1,e,\emptyset)$};
  \draw (-5,3) node[state] (2e) {$(2,e,\emptyset)$};
  \draw (-3,3) node[state] (s12) {$(\emptyset,s_1,2)$};
  \draw (-1,3) node[state] (2s1) {$(2,s_1,\emptyset)$};
  \draw (1,3) node[state] (s21) {$(\emptyset,s_2,1)$};
  \draw (3,3) node[state] (1s2) {$(1,s_2,\emptyset)$};
  \draw (5.25,3) node[state] (s1s21) {$(\emptyset,s_1s_2,1)$};
  \draw (7.5,3) node[state] (2s1s2) {$(2,s_1s_2,\emptyset)$};
  \draw (9.75,3) node[state] (s2s12) {$(\emptyset,s_2s_1,2)$};
  \draw (12,3) node[state] (1s2s1) {$(1,s_2s_1,\emptyset)$};
  \draw (-10,4) node[state] (e) {$(\emptyset,e,\emptyset)$};
  \draw (-6,4) node[state] (s1) {$(\emptyset,s_1,\emptyset)$};
  \draw (-2,4) node[state] (s2) {$(\emptyset,s_2,\emptyset)$};
  \draw (2,4) node[state] (s1s2) {$(\emptyset,s_1s_2,\emptyset)$};
  \draw (6,4) node[state] (s2s1) {$(\emptyset,s_2s_1,\emptyset)$};
  \draw (10,4) node[state] (s1s2s1) {$(\emptyset,s_1s_2s_1,\emptyset)$};
  \draw[linet] (e)--(e2);
  \draw[linet] (e)--(e1);
  \draw[linet] (e)--(1e);
  \draw[linet] (e)--(2e);
  \draw[linet] (e2)--(e12);
  \draw[linet] (e2)--(1e2);
  \draw[linet] (e2)--(2e2);
  \draw[linet] (e1)--(e12);
  \draw[linet] (e1)--(1e1);
  \draw[linet] (e1)--(2e1);
  \draw[linet] (1e)--(1e2);
  \draw[linet] (1e)--(1e1);
  \draw[linet] (1e)--(12e);
  \draw[linet] (2e)--(2e2);
  \draw[linet] (2e)--(2e1);
  \draw[linet] (2e)--(12e);
  \draw[linet] (e12)--(1e12);
  \draw[linet] (e12)--(2e12);
  \draw[linet] (1e2)--(1e12);
  \draw[linet] (1e2)--(12e2);
  \draw[linet] (1e1)--(1e12);
  \draw[linet] (1e1)--(12e1);
  \draw[linet] (2e2)--(2e12);
  \draw[linet] (2e2)--(12e2);
  \draw[linet] (2e1)--(2e12);
  \draw[linet] (2e1)--(12e1);
  \draw[linet] (12e)--(12e2);
  \draw[linet] (12e)--(12e1);
  \draw[linet] (12e12)--(1e12);
  \draw[linet] (12e12)--(2e12);
  \draw[linet] (12e12)--(12e2);
  \draw[linet] (12e12)--(12e1);
  \draw (e)--(e2);
  \draw (e)--(e1);
  \draw (e)--(1e);
  \draw (e)--(2e);
  \draw (e2)--(e12);
  \draw (e2)--(1e2);
  \draw (e2)--(2e2);
  \draw (e1)--(e12);
  \draw (e1)--(1e1);
  \draw (e1)--(2e1);
  \draw (1e)--(1e2);
  \draw (1e)--(1e1);
  \draw (1e)--(12e);
  \draw (2e)--(2e2);
  \draw (2e)--(2e1);
  \draw (2e)--(12e);
  \draw (e12)--(1e12);
  \draw (e12)--(2e12);
  \draw (1e2)--(1e12);
  \draw (1e2)--(12e2);
  \draw (1e1)--(1e12);
  \draw (1e1)--(12e1);
  \draw (2e2)--(2e12);
  \draw (2e2)--(12e2);
  \draw (2e1)--(2e12);
  \draw (2e1)--(12e1);
  \draw (12e)--(12e2);
  \draw (12e)--(12e1);
  \draw (12e12)--(1e12);
  \draw (12e12)--(2e12);
  \draw (12e12)--(12e2);
  \draw (12e12)--(12e1);
  \draw[linet] (s1)--(s12);
  \draw[linet] (s1)--(2s1);
  \draw[linet] (s12)--(2s12);
  \draw[linet] (2s1)--(2s12);
  \draw (s1)--(e1);
  \draw (s1)--(1e);
  \draw (s1)--(s12);
  \draw (s1)--(2s1);
  \draw (s12)--(e12);
  \draw (s12)--(1e2);
  \draw (s12)--(2s12);
  \draw (2s1)--(2e1);
  \draw (2s1)--(12e);
  \draw (2s1)--(2s12);
  \draw (2s12)--(2e12);
  \draw (2s12)--(12e2);
  \draw[linet] (s2)--(s21);
  \draw[linet] (s2)--(1s2);
  \draw[linet] (1s2)--(1s21);
  \draw[linet] (s21)--(1s21);
  \draw (s2)--(e2);
  \draw (s2)--(2e);
  \draw (s2)--(s21);
  \draw (s2)--(1s2);
  \draw (s21)--(e12);
  \draw (s21)--(2e1);
  \draw (s21)--(1s21);
  \draw (1s2)--(1e2);
  \draw (1s2)--(12e);
  \draw (1s2)--(1s21);
  \draw (1s21)--(1e12);
  \draw (1s21)--(12e1);
  \draw[linet] (s1s2)--(s1s21);
  \draw[linet] (s1s2)--(2s1s2);
  \draw[linet] (2s1s2)--(2s1s21);
  \draw[linet] (s1s21)--(2s1s21);
  \draw (s1s2)--(s12);
  \draw (s1s2)--(1s2);
  \draw (s1s2)--(s1s21);
  \draw (s1s2)--(2s1s2);
  \draw (s1s21)--(e12);
  \draw (s1s21)--(1s21);
  \draw (s1s21)--(2s1s21);
  \draw (2s1s2)--(12e);
  \draw (2s1s2)--(2s12);
  \draw (2s1s2)--(2s1s21);
  \draw (2s1s21)--(2e12);
  \draw (2s1s21)--(12e1);
  \draw[linet] (s2s1)--(s2s12);
  \draw[linet] (s2s1)--(1s2s1);
  \draw[linet] (s2s12)--(1s2s12);
  \draw[linet] (1s2s1)--(1s2s12);
  \draw (s2s1)--(s21);
  \draw (s2s1)--(2s1);
  \draw (s2s1)--(s2s12);
  \draw (s2s1)--(1s2s1);
  \draw (s2s12)--(e12);
  \draw (s2s12)--(2s12);
  \draw (s2s12)--(1s2s12);
  \draw (1s2s1)--(12e);
  \draw (1s2s1)--(1s21);
  \draw (1s2s1)--(1s2s12);
  \draw (1s2s12)--(1e12);
  \draw (1s2s12)--(12e2);
  \draw (s1s2s1)--(s1s21);
  \draw (s1s2s1)--(2s1s2);
  \draw (s1s2s1)--(s2s12);
  \draw (s1s2s1)--(1s2s1);
\end{tikzpicture}
\]
}
\caption{The two-sided Coxeter complex $\Xi(A_2)$. Highlighted edges indicate the shelling.}\label{fig:A2}
\end{figure}
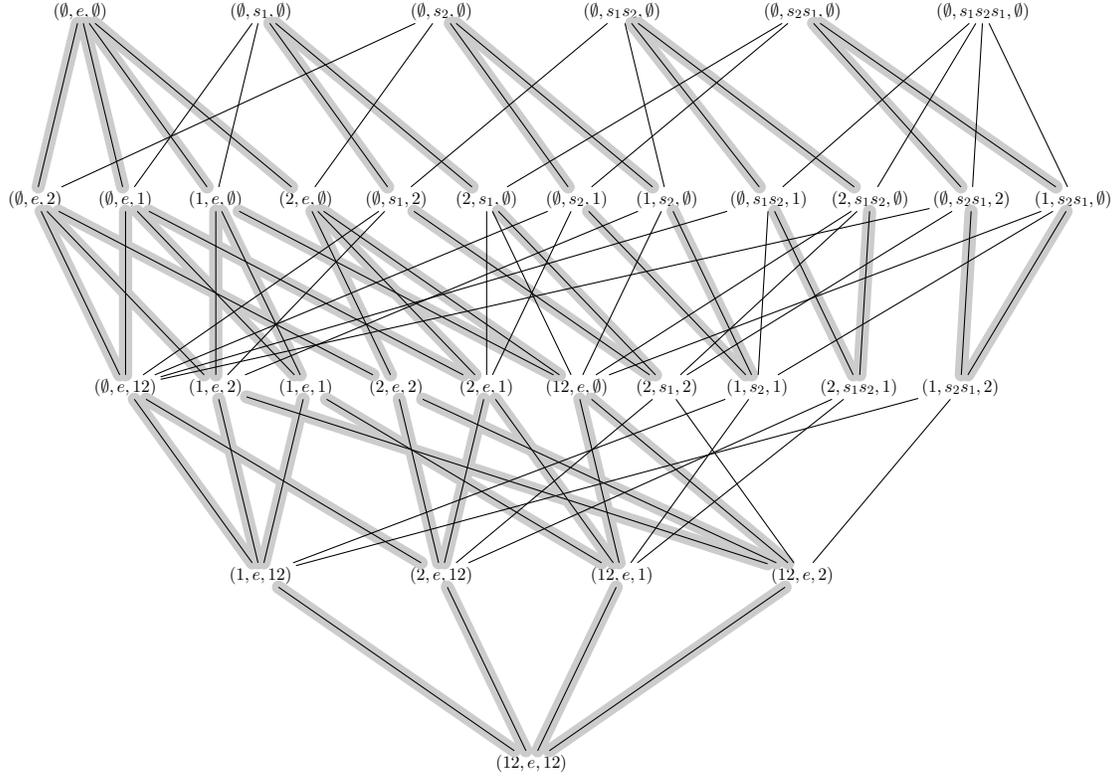

Before we move on to prove the various properties of $\Xi$ given in Theorem \ref{thm:main}, we include some remarks.

\begin{rem}\label{rem:distinct}
A first guess to define a two-sided Coxeter complex is to consider the set of all double cosets $W_I wW_J$, ordered by reverse inclusion. Such a poset does indeed exist, but it is difficult to analyze. It is not even obvious when this poset is ranked. For one thing, there are many subtle equalities of cosets, e.g., with $w$ fixed, we might have $W_I w W_J = W_{I'}wW_{J'}$ and yet $I\neq I'$ or $J \neq J'$. For an extreme case, notice that for any $I \subseteq J$, we have $W_I e W_J = W_J$. Enumeration of the number of distinct parabolic double cosets is the topic of work of Billey, Konvalinka, Petersen, Slofstra, and Tenner \cite{BKPST}.
\end{rem}

\begin{rem}\label{rem:DGS}
If we fix a choice of $I$ and $J$, we can restrict the Bruhat order on $W$ to give a partial ordering on the elements ${}^IW^J$, or on the double quotient $W_I\backslash W /W_J$. Stembridge gives a geometric construction of this partial order in terms of root systems \cite{Stembridge}. Diaconis and Gangolli did the same in the case of the symmetric group, realized as a partial order on contingency tables with prescribed row and column sums \cite{DG}.
\end{rem}

\subsection{$\Xi$ is boolean}

The maximal elements in $\Xi$ are those of the form $(\emptyset, w, \emptyset)$, and there is a unique minimum, $(S, e, S)$. The rank one elements are those of the form $(S-\{i\}, e, S)$ and $(S,e,S-\{j\})$, i.e., those obtained by omitting a single element from $S$ on either the left or on the right.

We will now prove that lower intervals in the poset $\Xi$ are isomorphic to boolean algebras. Since the face poset of a simplex is the boolean algebra on its vertex set, a poset with this property is known as a \emph{simplicial poset}, or as a \emph{boolean complex}.

\begin{thm}\label{thm:boolean}
The poset $\Xi$ is a simplicial poset. In particular, the interval below the element $(I,w,J) \in \Xi$ isomorphic to the set of all subsets of  $(S-I)\times (S-J)$.
\end{thm}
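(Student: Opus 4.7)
The plan is to construct an explicit bijection between the interval below $(I,w,J)$ and the set of pairs $\{(I',J') : I \subseteq I' \subseteq S,\ J \subseteq J' \subseteq S\}$ ordered by componentwise reverse containment, and to show this bijection is an order isomorphism. Since this pair poset is precisely the product of boolean lattices $2^{S-I} \times 2^{S-J}$, the interval will inherit the required boolean structure.

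For the construction, I would apply Lemma~\ref{lem:minrep}: for each pair $(I',J')$ with $I \subseteq I' \subseteq S$ and $J \subseteq J' \subseteq S$, the double coset $W_{I'} w W_{J'}$ has a unique minimal-length representative $w' \in {}^{I'}W^{J'}$, so $(I',w',J') \in \Xi$. Define $\phi(I',J') := (I',w',J')$. The inequality $\phi(I',J') \leq_\Xi (I,w,J)$ is easy to verify: the index-set containments are by hypothesis, and $W_{I'} w' W_{J'} = W_{I'} w W_{J'} \supseteq W_I w W_J$ follows from $W_I \subseteq W_{I'}$ and $W_J \subseteq W_{J'}$.

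Next I would argue $\phi$ is a bijection onto the lower interval. Injectivity is immediate from the triple notation. For surjectivity, let $(I'',w'',J'') \leq_\Xi (I,w,J)$. The double-coset condition gives $w \in W_{I''} w'' W_{J''}$, so the two double cosets $W_{I''} w W_{J''}$ and $W_{I''} w'' W_{J''}$ share the element $w$ and therefore coincide. By the uniqueness half of Lemma~\ref{lem:minrep}, this common double coset has a single minimal-length representative; since $w'' \in {}^{I''} W^{J''}$, that representative must be $w''$ itself, so $\phi(I'',J'') = (I'',w'',J'')$.

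Finally, $\phi$ reverses order: componentwise reverse containment $I'_1 \supseteq I'_2$ and $J'_1 \supseteq J'_2$ makes the double-coset containment $W_{I'_1} w W_{J'_1} \supseteq W_{I'_2} w W_{J'_2}$ automatic, so $\phi(I'_1,J'_1) \leq_\Xi \phi(I'_2,J'_2)$; the converse direction is just the definition of $\leq_\Xi$. This identifies the lower interval with $2^{S-I} \times 2^{S-J}$, proving $\Xi$ is a simplicial poset. The step that demands care is the surjectivity argument, where one must recognize that the containment $W_{I''} w'' W_{J''} \supseteq W_I w W_J$ forces an equality of double cosets, so that Lemma~\ref{lem:minrep} can pin down $w''$ from the pair $(I'',J'')$ and $w$ alone.
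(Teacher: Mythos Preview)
Your proof is correct and follows essentially the same route as the paper: both arguments send a pair $(I',J')$ with $I\subseteq I'\subseteq S$, $J\subseteq J'\subseteq S$ to the unique triple $(I',w',J')$ obtained by applying Lemma~\ref{lem:minrep} to $W_{I'}wW_{J'}$, and both observe that any double coset $W_{I'}vW_{J'}\supseteq W_IwW_J$ must contain $w$ and hence equal $W_{I'}wW_{J'}$. Your write-up is slightly more explicit in checking that $\phi$ is an order isomorphism rather than just a bijection, which the paper leaves implicit.
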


\begin{proof}
Fix an element $F=(I,w,J)$ of $\Xi$ and consider any element below $F$ in the partial order, i.e., suppose we have an element $(I',w',J') \leq_{\Xi} F$. Then by definition, $S \supseteq I' \supseteq I$ and $S \supseteq J' \supseteq J$, so $(I'-I, J'-J)$ is an element of $(S-I)\times (S-J)$. 

To finish the proof we must show that every pair of subsets $(I'-I, J'-J)$ in $(S-I)\times (S-J)$ corresponds to a unique element below $F$. 

Suppose $(I'-I, J'-J)$ is a pair of subsets in $(S-I)\times (S-J)$, i.e., $S \supseteq I' \supseteq I$ and $S \supseteq J' \supseteq J$. If $C = W_{I'} v W_{J'}$ is a coset that contains $W_I w W_J$, then in particular $w \in C$ and we can write $C=W_{I'} w W_{J'}$. Thus for fixed $I'$ and $J'$, there is one such coset. By Lemma \ref{lem:minrep} there exists a unique element $w' \in C$ such that $\Des_L(w') \subseteq S-I'$ and $\Des_R(w') \subseteq S-J'$. This identifies the unique triple $G=(I',w',J')$ such that $G \leq_{\Xi} F$, completing the proof. 
\end{proof}

Theorem \ref{thm:boolean} means that each element of $\Xi$ can be thought of as an abstract simplex. As such, we will refer to the elements as \emph{faces}. We say a face $(I,w,J)$ is \emph{represented by $w$}.

\subsection{$\Xi$ is balanced}

While each face of $\Xi$ is a simplex, it is not a simplicial complex, since distinct faces may share the same vertex set. In fact, we will see that for any $(W,S)$, $\Xi$ has the property that every \emph{facet} (maximal face) has the same vertex set.

The dimension of a face is given by one less than its rank in the poset, i.e., if $F = (I,w,J)$,
\[
 \dim F = |S-I|+|S-J| - 1.
\]
In particular, if $|S| = n$, then $\Xi$ has $2n$ vertices, each of the form $(S-\{i\},e,S)$ or $(S,e,S-\{j\})$. The dimension of $\Xi$ is the dimension of a maximal face, i.e., $\dim \Xi = \dim (\emptyset,w,\emptyset) = 2n-1$.

A $(n-1)$-dimensional simplicial complex is \emph{balanced} if there is an assignment of colors from the set $\{1,2,\ldots,n\}$ to its vertices such that each face has distinctly colored vertices. An important feature of the Coxeter complex $\Sigma$ is that it is balanced; a balanced coloring is given by declaring the \emph{color} of the pair $F=(w,J)$ is $\col(F) = S-J$, where we fix an identification between $S$ and the set $\{1,2,\ldots,n\}$.

To show $\Xi$ is balanced we will assign each vertex a color via
\[
 \col( (S-\{i\},e,S)) = (\{i\}, \emptyset) \mbox{ and } \col( (S,e,S-\{j\})) = (\emptyset, \{j\}), 
\]
and for a general face $F$, $\col(F)$ is the union of the colors of its vertices, i.e., if $F=(I,w,J)$, then
\[
 \col(F) = (S-I,S-J).
\]

Since there are $2n$ colors and only $2n$ vertices, we see that $\Xi$ is trivially balanced, i.e., no face has two vertices of the same color since every vertex has  a unique color. We have now established part (1) of Theorem \ref{thm:main}.

\subsection{$\Sigma$ is a relative subcomplex of $\Xi$}

We have already mentioned that maximal faces of $\Xi$ are in bijection with elements of $W$. Let us denote the facet corresponding to an element $w$ by $F_w=(\emptyset, w, \emptyset)$. If we consider only adding elements to the right we get a subposet of $\Xi$ that corresponds to a facet of the usual Coxeter complex. That is, consider the interval
\[
 [ (\emptyset,e,S), F_w ] = \{ G \in \Xi : (\emptyset,e,S) \leq_{\Xi} G \leq_{\Xi} (\emptyset, w, \emptyset) \}.
 \]
We can represent elements $G \in [ (\emptyset,e,S), F_w ]$ as $G= (\emptyset,u,J)$, such that $J \subseteq S$, $u \in {}^{\emptyset}W^J$, and $w \in uW_J$. 

Similarly, a facet of $\Sigma$ can be represented as an interval
\begin{align*}
 [(e,S), (w,\emptyset)] &= \{ G \in \Sigma : (e,S) \leq_{\Sigma} G \leq_{\Sigma} (w,\emptyset \},\\
  &= \{ (u,J) : J \subseteq S, u \in W^J, w \in uW_J\}.
\end{align*}

Thus as posets
\[
 [(\emptyset,e,S),F_w] \cong [ (e,S), (w,\emptyset)] \in \Sigma.
\]
(Of course the same idea would work with right cosets, so we could also identify facets of $\Sigma$ with intervals of the form $[(S,e,\emptyset),F_w]$ if we wish.) 

Taking the union of all such intervals we get a full copy of $\Sigma$ as an upper order ideal (also known as an order filter) inside of $\Xi$.
\begin{align*}
 \Sigma = \{ (w,J) : J \subseteq S, w \in W^J\} &\cong  \{ (\emptyset, w, J) : J \subseteq S, w \in {}^{\emptyset}W^J \},\\ 
 &=\{ F \in \Xi : (\emptyset, e, S) \leq_{\Xi} F \}.
\end{align*}
To phrase this result another way, we say that $\Sigma$ is a \emph{relative subcomplex} of $\Xi$. This establishes part (2) of Theorem \ref{thm:main}.

\subsection{$\Xi$ is partitionable}

We can notice that the faces represented by a given element $w$ form an upper interval in $\Xi$, i.e., they form an interval whose maximal element has maximal rank in the face poset. To be specific, let $R_w = (\Asc_L(w), w, \Asc_R(w))$, which we call the \emph{restriction} of $w$.  Then the interval $[R_w, F_w]$ in $\Xi$ consists of all faces represented by $w$, and moreover this interval is boolean:
\begin{align*}
 [R_w, F_w] &= \{ (I,w,J) : I \subseteq \Asc_L(w),  J\subseteq \Asc_R(w) \},\\
 & \cong  \Asc_L(w)\times \Asc_R(w). 
\end{align*}

The union of all such intervals partitions the faces of $\Xi$, i.e.,
\begin{equation}\label{eq:partition}
 \Xi = \bigcup_{w \in W} [R_w, F_w],
\end{equation}
and this union is disjoint. Moreover, since each interval in the partition is an upper ideal isomorphic to a boolean algebra, $\Xi$ is \emph{partitionable} in the topological sense as well. This property foreshadows the shellability result of the next section. See \cite[Section III.2]{Stanley} for the relevant definitions.

\section{Topology}\label{sec:topology}

In this section we will prove parts (3) and (4) of Theorem \ref{thm:main}. 

\subsection{$\Xi$ is shellable}\label{sec:shelling}

We first make the following simple observation. If $(I,u,J)$ is a face of $\Xi$ below the face $(I',v,J')$, then in particular $W_{I'}vW_{J'} \subseteq W_IuW_J$, and $v \in W_I uW_J$. But by Lemma \ref{lem:minrep} this means $u$ is below $v$ in the two-sided weak order.

\begin{obs}\label{obs:weak}
If $(I,u,J) \leq_{\Xi} (I',v,J')$, then $u\leq_{LR} v$.
\end{obs}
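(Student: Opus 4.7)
The plan is to note that this is essentially an immediate consequence of Lemma \ref{lem:minrep}, once one unpacks the definition of the order on $\Xi$. So the proposal is mostly a matter of stringing together the right observations in the right order.

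First I would observe that, by definition of $\leq_\Xi$, the hypothesis $(I,u,J) \leq_\Xi (I',v,J')$ gives (among other things) the set-containment $W_I u W_J \supseteq W_{I'} v W_{J'}$. Since $e \in W_{I'}$ and $e \in W_{J'}$, we have $v = e \cdot v \cdot e \in W_{I'} v W_{J'}$, and hence $v \in W_I u W_J$. That is, $v$ lies in the double coset represented by $u$.

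Next I would invoke Lemma \ref{lem:minrep}. Because $u \in {}^I W^J$, $u$ is the unique minimal-length element of the double coset $W_I u W_J$, and Lemma \ref{lem:minrep} further guarantees that $u \leq_{LR} v'$ for every $v' \in W_I u W_J$. Applying this with $v' = v$ yields $u \leq_{LR} v$, which is the desired conclusion.

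There is no real obstacle here: the content has already been absorbed into Lemma \ref{lem:minrep}, and the only thing to check is the bookkeeping step that $v$ actually sits inside the double coset $W_I u W_J$, which follows from the set-containment built into the definition of $\leq_\Xi$ together with the fact that $v$ lies in its own double coset $W_{I'} v W_{J'}$. The slightly delicate point worth flagging in the write-up is just that one must use the \emph{coset-containment} clause of the definition of $\leq_\Xi$; the conditions $I \supseteq I'$ and $J \supseteq J'$ alone would not suffice to conclude $v \in W_I u W_J$.
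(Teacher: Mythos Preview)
Your proposal is correct and follows essentially the same approach as the paper: from the definition of $\leq_\Xi$ you extract the containment $W_{I'}vW_{J'}\subseteq W_IuW_J$, note that $v\in W_IuW_J$, and then invoke Lemma~\ref{lem:minrep} to conclude $u\leq_{LR}v$. Your write-up is in fact slightly more explicit than the paper's, spelling out why $v$ lies in its own double coset and flagging that the coset-containment clause (not just $I\supseteq I'$, $J\supseteq J'$) is what is needed.
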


From this simple observation it follows that any choice of linear extension of the two-sided weak order for $W$ is a shelling order for $\Xi$. First recall the definition of a \emph{shelling} of a boolean complex. This is an ordering of the facets $F_1, F_2,\ldots$ such that the intersection of the boundary of each new facet with the union of the boundaries of the prior facets is a pure codimension one complex. That is, for each $k$, we must show
\[
 \partial F_k \cap \left( \bigcup_{i = 1}^{k-1} \partial F_i \right)
\]
is a pure codimension one complex. Here $\partial F_k$ denotes the boundary of $F_k$, i.e., all proper faces of $F_k$.

Consider all the codimension one faces of the facet $F_w=(\emptyset, w, \emptyset)$. These come in four types:
\begin{itemize}
\item $(\{s\},sw,\emptyset)$ if $s \in \Des_L(w)$,
\item $(\emptyset,ws,\{s\})$ if $s \in \Des_R(w)$,
\item $(\{s\},w,\emptyset)$ if $s \in \Asc_L(w)$,
\item $(\emptyset,w,\{s\})$ if $s \in \Asc_R(w)$.
\end{itemize}
In the first two cases, the elements $sw$ and $ws$ are below $w$ in the two-sided weak order. If we order the facets of $\Xi$ according to a linear extension of the two-sided weak order:
\[
F_{w_1}, F_{w_2}, \ldots, F_{w_k}, F_w, \ldots,
\]
then the intersection of the boundary of $F_w$ with the union of the prior facets is given by those faces below $F_w$ in $\Xi$ that are not represented by $w$, i.e.,
\[
 \partial F_w \cap \left(\bigcup_{i=1}^k \partial F_{w_i} \right) = \bigcup_{\substack{ s \in \Des_L(w) \\ t \in \Des_R(w)}} [(S,e,S), (\{s\}, sw, \emptyset)] \cup [(S,e,S), (\emptyset, wt, \{t\})].
\]
Because all maximal faces have codimension one, we have proved the following proposition.

\begin{prp}[Shelling order]
Any linear extension of the two-sided weak order on $W$ is a shelling order for $\Xi$. In particular, any linear extension of the Bruhat order is a shelling order.
\end{prp}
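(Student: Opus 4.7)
The plan is to verify the explicit formula for $\partial F_w \cap \big(\bigcup_{i<k} \partial F_{w_i}\big)$ displayed immediately above the proposition statement. Its right-hand side is visibly a pure $(2n-2)$-dimensional subcomplex of $\partial F_w$, being a union of closed intervals each of rank one less than $F_w$, so shellability follows at once. For the initial facet $F_e$ the intersection is empty, so I may assume $w \neq e$ throughout.

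First I would use the partition (\ref{eq:partition}) together with Observation \ref{obs:weak} to translate the intersection. Each face of $\Xi$ lies in a unique interval $[R_v, F_v]$, and by Observation \ref{obs:weak} it lies in $\partial F_u$ exactly when $v \leq_{LR} u$. Therefore a face $(I,u,J) \leq_\Xi F_w$ belongs to $\bigcup_{i<k} \partial F_{w_i}$ precisely when $u <_{LR} w$, equivalently when it is not represented by $w$. It suffices to show this collection of "not represented by $w$" faces agrees with the displayed union over left and right descents of $w$.

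The inclusion $\supseteq$ is routine: for $s \in \Des_L(w)$, the face $(\{s\}, sw, \emptyset)$ is represented by $sw \neq w$ and satisfies $\{sw, w\} \subseteq W_{\{s\}} \cdot sw$, so it is a codimension-one face of $F_w$; the right-descent case is symmetric, and the collection of faces not represented by $w$ is downward closed. For the reverse inclusion, suppose $(I, u, J) \leq_\Xi F_w$ with $u \neq w$. Invoking the standard length-additive factorization of minimal double-coset representatives, write $w = a u b$ with $a \in W_I$, $b \in W_J$, and $\ell(w) = \ell(a) + \ell(u) + \ell(b)$. Since $u \neq w$, at least one of $a, b$ is nontrivial. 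If $a \neq e$, the first letter $s$ of a reduced expression for $a$ lies in $I$ and satisfies $\ell(sw) = \ell(w) - 1$, so $s \in I \cap \Des_L(w)$; closure of $W_I u W_J$ under left multiplication by $s$ then gives $\{sw, w\} \subseteq W_I u W_J$, hence $(I, u, J) \leq_\Xi (\{s\}, sw, \emptyset)$. The case $a = e$, $b \neq e$ yields the symmetric right-descent containment via the last letter of a reduced word for $b$.

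The main obstacle is justifying the length-additive factorization $w = aub$, which is slightly stronger than Lemma \ref{lem:minrep} as stated; it is a classical property of minimal parabolic double coset representatives, following from \cite{Bourbaki} or by iterating the one-sided version (applied to $u \in W^J$ inside $W_I \cdot uW_J$, then again on the left) from \cite{BjB}. The concluding remark of the proposition is then automatic: the two-sided weak order is a subposet of Bruhat order, so every linear extension of Bruhat order is in particular a linear extension of the two-sided weak order, and the same shelling argument applies.
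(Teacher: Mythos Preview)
Your argument is correct and follows the paper's strategy: identify $\partial F_w \cap \bigcup_{i<k}\partial F_{w_i}$ with the set of faces below $F_w$ not represented by $w$, then match this with the displayed union of codimension-one intervals. One wording issue: the claim that a face in $[R_v,F_v]$ ``lies in $\partial F_u$ exactly when $v\leq_{LR}u$'' is false as stated (in $A_2$, the face $(\{s_2\},e,\emptyset)$ is not below $F_{s_1}$ even though $e\leq_{LR}s_1$); fortunately your actual deduction only uses the forward implication from Observation~\ref{obs:weak} together with the trivial containment $[R_u,F_u]\subseteq[(S,e,S),F_u]$, so the conclusion stands.

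The one substantive difference from the paper is your verification of the inclusion $\subseteq$ via the length-additive factorization $w=aub$. The paper avoids this: since the interval $[(S,e,S),F_w]$ is boolean by Theorem~\ref{thm:boolean}, the complement of the upper interval $[R_w,F_w]$ is automatically the union of the principal ideals below those coatoms not lying above $R_w$, and by the four-type classification these coatoms are exactly the descent-type faces $(\{s\},sw,\emptyset)$ and $(\emptyset,wt,\{t\})$. Your route is equally valid, at the modest cost of importing the double-coset factorization from outside the paper; the paper's route stays entirely within results already established.
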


This proves part (3) of Theorem \ref{thm:main}. A shelling of $\Xi(A_2)$ is indicated in Figure \ref{fig:A2}. The highlighted edges represent the intervals $[R_w,F_w]$, and with facets taken left to right, we have a linear extension of the two-sided weak order.

\subsection{Consequences of shelling}


A simplicial complex is a \emph{psuedomanifold} if every codimension one face is contained in exactly two maximal faces. A result of Bj\"orner tells us about shellable pseudomanifolds. 

\begin{thm}[Bj\"orner { \cite[Theorem 1.5]{Bjorner2} }]
Suppose $\Delta$ is a shellable pseudomanifold. If $\Delta$ is infinite, it is contractible. If $\Delta$ is finite it is a sphere.
\end{thm}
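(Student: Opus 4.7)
The plan is to prove this by induction on the shelling, showing that at each stage the partial complex $\Delta_k = F_1 \cup \cdots \cup F_k$ is homeomorphic to either a $d$-ball or a $d$-sphere, and that once we hit a sphere the shelling must terminate. Throughout, $d = \dim\Delta$.

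For the base case, $\Delta_1 = F_1$ is a single simplex, hence a $d$-ball. For the inductive step, suppose $\Delta_k$ is a $d$-ball and we attach $F_{k+1}$ along the region $A := \partial F_{k+1} \cap \Delta_k$. By the shelling hypothesis, $A$ is a pure $(d-1)$-dimensional subcomplex of the sphere $\partial F_{k+1} \cong S^{d-1}$. Standard PL/CW topology tells us that if $A$ is a proper subcomplex of $\partial F_{k+1}$ then $\Delta_{k+1}$ is again a $d$-ball, while if $A = \partial F_{k+1}$ then $\Delta_{k+1}$ is a $d$-sphere. The pseudomanifold hypothesis enters here to control which case occurs: if $\Delta_k$ is a sphere and we try to attach any further facet $F_{k+1}$, any codimension-one face $G \subseteq F_{k+1} \cap \Delta_k$ would lie in $F_{k+1}$ and in both of the two facets of $\Delta_k$ that already contained $G$, violating the pseudomanifold property. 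Hence once $\Delta_k$ is a sphere the shelling must stop.

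For the finite case, the shelling terminates at some $\Delta_N = \Delta$. If $\Delta$ were a ball, some codimension-one face on its boundary would be contained in only one facet of $\Delta$, contradicting the pseudomanifold condition. So $\Delta$ must be a sphere. For the infinite case, the shelling never produces a sphere (otherwise it would have terminated as above), so every $\Delta_k$ is a $d$-ball, hence contractible. The inclusions $\Delta_k \hookrightarrow \Delta_{k+1}$ are inclusions of contractible spaces into contractible spaces and in particular are homotopy equivalences, so $\Delta = \varinjlim \Delta_k$ is contractible (one can either invoke the direct limit argument or simply note that any compact subset of $\Delta$ lies in some $\Delta_k$ and contracts there).

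The main obstacle is the ball-attachment step: justifying cleanly that gluing a $d$-ball to a $d$-ball along a pure $(d-1)$-subcomplex of the boundary yields a ball or sphere according to whether the attaching region is proper. In the strictly simplicial setting this is classical, but since $\Xi$ is only a boolean complex (simplicial poset), one must either appeal to Bj\"orner's framework in \cite{Bjorner2}, which handles shellability of simplicial posets directly, or pass to a simplicial subdivision. In the present paper the cleanest route is simply to cite Bj\"orner's theorem as stated, since all the combinatorial hypotheses (shellability, pseudomanifold/thinness) have already been verified for $\Xi$.
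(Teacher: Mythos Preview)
The paper does not prove this theorem: it is quoted verbatim as a result of Bj\"orner and then applied as a black box to the barycentric subdivision $\Xi'$ to obtain the corollary that $\Xi$ is a sphere or contractible. There is no argument in the paper to compare your attempt against, and your own closing paragraph already acknowledges that citing Bj\"orner is exactly the route the paper takes.

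For what it is worth, your outline is essentially Bj\"orner's own argument and is sound modulo the PL ball-gluing step you flag. One point you gloss over: to glue $F_{k+1}$ to the ball $\Delta_k$ along $A$ and invoke the ball-union-ball lemma, you need $A \subseteq \partial\Delta_k$, not merely $A \subseteq \Delta_k$. This is where the pseudomanifold hypothesis first enters, well before the termination step: a $(d-1)$-face $G$ of $A$ lies in $F_{k+1}$ and in some facet of $\Delta_k$, so by the pseudomanifold condition on $\Delta$ it lies in exactly one facet of $\Delta_k$, hence in $\partial\Delta_k$.
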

 
While $\Xi$ is not a simplicial complex, its barycentric subdivision is. Let $\Xi'$ denote this simplicial complex, whose faces are chains 
\[
 F'=\emptyset <_{\Xi} F_1 <_{\Xi} F_2 <_{\Xi} \cdots <_{\Xi} F_k, \qquad F_i \in \Xi.
\]
The dimension of such a face is $k-1$, and inclusion of faces in $\Xi'$ is given by inclusion of the sets of faces, i.e., $F' \leq_{\Xi'} G'=\emptyset  <_{\Xi} G_1  <_{\Xi}  \cdots  <_{\Xi} G_l$ if and only if
\[
 \{ F_1,\ldots,F_k\} \subseteq \{ G_1,\ldots,G_l\}.
\]

A poset is called \emph{thin} if every interval of length two has exactly four elements. Since $\Xi$ is a simplicial poset, every interval is boolean, and $\Xi$ is clearly thin. The nice thing about being thin is that the barycentric subdivision $\Xi'$ is a pseudomanifold. Indeed if $F'$ is a codimension one face of $\Xi'$ it has the form
\[
 F'= \emptyset <_{\Xi} F_1 <_{\Xi} F_2 <_{\Xi} \cdots <_{\Xi} F_{j-1} <_{\Xi} F_{j+1} <_{\Xi} \cdots <_{\Xi} F_d,
\]
where $\dim(F_i) = i-1$ and $d=2n$. Since $\Xi$ is thin the interval $[F_{j-1},F_{j+1}] = \{ F_{j-1}, H, H', F_{j+1}\}$ has exactly four elements, so there are exactly two choices for how to fill the gap in $F'$ to create a facet of $\Xi'$; either $F_{j-1} <_{\Xi} H <_{\Xi} F_{j+1}$ or $F_{j-1} <_{\Xi} H' <_{\Xi} F_{j+1}$. 

Here we are tacitly assuming $j=1,\ldots,d-1$, but we also need to consider the $j=d$ case, i.e., faces in $\Xi'$ of the form
\[
F'= \emptyset <_{\Xi} F_1 <_{\Xi} F_2 <_{\Xi} \cdots <_{\Xi} F_{d-1}.
\]
But if $F_{d-1}$ is a codimension one face of $\Xi$, we saw from Section \ref{sec:shelling} it has the form $(\{s\},w,\emptyset)$ or $(\emptyset,w,\{s\})$, whose corresponding double cosets are  $W_{\{s\}}wW_{\emptyset} = \{ w, sw\}$ or $W_{\emptyset}wW_{\{s\}} = \{ w, ws\}$. In either case, the coset has exactly two elements, so the face $(\{s\},w,\emptyset)$ is only contained in the facets $(\emptyset, w, \emptyset)$ and $(\emptyset, sw, \emptyset)$, while $(\emptyset,w,\{s\})$ is only contained in $(\emptyset,w,\emptyset)$ and $(\emptyset,ws,\emptyset)$. 

Thus we have shown that every codimension one face $F'$ of $\Xi'$ is contained in exactly two maximal faces, i.e., $\Xi'$ is a pseudomanifold.

Having established that $\Xi'$ is a pseudomanifold, we also claim that $\Xi'$ inherits shellability from $\Xi$. This is well-known for finite posets, see, e.g., \cite[Proposition 4.4(a)]{Bjorner}, and is easily generalized to arbitrary simplicial posets whose facets all have the same dimension.

To summarize, the barycentric subdivision of $\Xi$ is a shellable pseudomanifold. Since barycentric subdivision respects topology, we obtain the following corollary, establishing parts (4) and (5a) of Theorem \ref{thm:main}. 

\begin{cor}\label{cor:sphere}
The barycentric subdivision of $\Xi$ is a shellable pseudomanifold, and hence:
\begin{itemize}
\item $\Xi$ is contractible when $W$ is infinite,
\item $\Xi$ is a sphere when $W$ is finite.
\end{itemize}
\end{cor}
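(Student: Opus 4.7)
The proof is essentially assembled from pieces already established in the preceding discussion; my plan is to package them in the correct order and invoke Björner's theorem.

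First I would explicitly confirm the pseudomanifold claim for $\Xi'$. The paragraphs preceding the corollary already verify this in two cases: for a codimension-one face of $\Xi'$ arising from a gap $F_{j-1} <_{\Xi} \widehat{?} <_{\Xi} F_{j+1}$ with $j < 2n$, thinness of $\Xi$ (a consequence of every lower interval being boolean, by Theorem \ref{thm:boolean}) provides exactly two ways to fill the gap; and for $j = 2n$, the explicit description of codimension-one faces of $\Xi$ from Section \ref{sec:shelling} shows each such face lies in exactly two facets. Thus every codimension-one face of $\Xi'$ is in exactly two facets, so $\Xi'$ is a pseudomanifold.

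Next I would show that $\Xi'$ inherits shellability from $\Xi$. Fix a linear extension $w_1, w_2, \ldots$ of the two-sided weak order, giving a shelling $F_{w_1}, F_{w_2}, \ldots$ of $\Xi$. Each facet $F_{w_k}$ of $\Xi$ is a boolean interval, so its barycentric subdivision is the order complex of a boolean lattice, which is well-known to be shellable (for instance by lexicographic ordering of maximal chains). I would then construct a shelling of $\Xi'$ by concatenating internal shellings of each subdivided facet, processed in the order prescribed by the shelling of $\Xi$. The key verification is that when we pass from the $k$-th block to the $(k+1)$-th block, the attaching subcomplex in $\Xi'$ is a pure codimension-one subcomplex; this follows because the corresponding attaching subcomplex in $\Xi$, namely $\partial F_{w_{k+1}} \cap \bigcup_{i \le k} \partial F_{w_i}$, is pure of codimension one (as shown in Section \ref{sec:shelling}), and barycentric subdivision of a pure $(d-1)$-subcomplex of the boundary of a $d$-simplex is again pure of codimension one. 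This is the standard argument referenced via \cite[Proposition 4.4(a)]{Bjorner}, suitably generalized to the infinite simplicial-poset setting, and is the most technical step.

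Finally I would invoke Björner's theorem (\cite[Theorem 1.5]{Bjorner2}): a shellable pseudomanifold is contractible if infinite and a sphere if finite. Applied to $\Xi'$, this yields both conclusions of the corollary, since the geometric realization $|\Xi'|$ is canonically homeomorphic to $|\Xi|$. The main obstacle, as noted, is the shellability-inheritance step; once that is in place, the topological conclusions are a direct appeal to Björner's theorem.
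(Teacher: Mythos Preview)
Your proposal is correct and follows essentially the same route as the paper: verify that $\Xi'$ is a pseudomanifold via thinness plus the codimension-one analysis, transfer shellability from $\Xi$ to $\Xi'$, and then apply Bj\"orner's theorem. The only difference is one of detail---you spell out the concatenation-of-chain-shellings argument that the paper simply attributes to \cite[Proposition 4.4(a)]{Bjorner} and its straightforward extension to the infinite pure simplicial-poset setting.
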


\begin{rem}
Let $\hat\Xi = \Xi \cup \{\hat 1\}$ be the poset obtained by adding a unique maximal element $\hat 1$ to the poset $\Xi$. Our argument for showing that $\Xi'$ is a pseudomanifold is essentially the argument that the poset $\hat\Xi$ is thin. The fact that $\Xi$ is a sphere in the finite case thus follows from \cite[Proposition 4.5]{Bjorner}.
\end{rem}

\section{Face enumeration for finite $W$}\label{sec:faces}

Throughout this section we assume $W$ is finite and fix an ordering on the generating set, $S=\{s_1,\ldots,s_n\}$. In this way we can identify subsets of $S$ with subsets of $[n]:=\{1,2,\ldots,n\}$. Let $x_1,\ldots,x_n$ and $y_1,\ldots,y_n$ be indeterminates. If $I \subseteq [n]$, let $x_I = \prod_{i \in I} x_i$, and similarly for $y_I$.  

For a face $F = (I, w, J)$ in $\Xi$, the \emph{face monomial} for $F$ is
\[
 m(F) = x_{[n]-I} y_{[n]-J}= \prod_{i \in [n]-I} x_i \prod_{j \in [n]-J} y_j.
\]
Notice this encodes the color of the face $F$; the $x$ variables encode the left sided vertices, the $y$ variables encode the right sided vertices.

Let $f(\mathbf{x},\mathbf{y})= f(x_1,\ldots,x_n,y_1,\ldots,y_n)$ denote the generating function for colors of faces, i.e.,
\[
 f(\mathbf{x},\mathbf{y}) = \sum_{F \in \Xi} m(F) = \sum_{I, J} f_{I,J} x_I y_J.
\]
Notice that the coefficient $f_{I,J}$ is the number of faces $(S-I,w,S-J)$, i.e., it counts the cardinality of the corresponding double quotient:
\begin{equation}\label{eq:double}
 f_{I,J} = |{}^{S-I}W^{S-J}|=|W_{S-I}\backslash W /W_{S-J}|.
\end{equation}
By Lemma \ref{lem:minrep} this is
\[
 f_{I,J} = |\{ w \in W: \Des_L(w) \subseteq I, \Des_R(w) \subseteq J\}|.
\]

Now define the quantities
\begin{align*}
 h_{I,J} &= \sum_{\substack{ K\subseteq I \\ L \subseteq J}} (-1)^{|I-K|+|J-L|} f_{K,L},\\
  &=|\{ w \in W: \Des_L(w) = I, \Des_R(w) = J\}|,
\end{align*}
and the corresponding generating function 
\begin{align*}
 h(x_1,\ldots,x_n,y_1,\ldots,y_n) &= \sum_{I,J} h_{I,J}x_Iy_J,\\
  &= \sum_{w \in W} x_{\Des_L(w)} y_{\Des_R(w)}.
\end{align*}

Recall that for a fixed element $w \in W$, the interval $[R_w,F_w]$ contains all the faces represented by $w$, and this interval is isomorphic to the boolean interval $\Asc_L(w)\times \Asc_R(w)$. This means the generating function for faces in this interval has the following form:
\begin{align*}
 \sum_{R_w \leq F \leq F_w} m(F) &= m(R_w)\cdot \prod_{i \in \Asc_L(w)} (1+x_i)\cdot  \prod_{ j \in \Asc_R(w) } (1+y_j),\\
  &= x_{\Des_L(w)}y_{\Des_R(w)} \cdot \prod_{i \in \Asc_L(w)} (1+x_i)\cdot \prod_{ j \in \Asc_R(w) } (1+y_j),\\
 &= \left( \prod_{i=1}^n (1+x_i)(1+y_i) \right)\prod_{ j \in \Des_L(w)} \frac{x_j}{1+x_j} \prod_{k \in \Des_R(w)} \frac{y_k}{1+y_k},
\end{align*}
where the final equality comes from the fact that ascent sets and descent sets are complementary.

Now using the partitioning of faces of $\Xi$ given in \eqref{eq:partition}, we get\begin{align}
 f(\mathbf{x},\mathbf{y}) &= \sum_{F \in \Xi} m(F),\nonumber \\
  &= \sum_{w \in W} \sum_{R_w \leq F \leq F_w} m(F),\nonumber \\
  &=  \prod_{i=1}^n (1+x_i)(1+y_i) \sum_{w \in W} \prod_{ j \in \Des_L(w)} \frac{x_j}{1+x_j} \prod_{k \in \Des_R(w)} \frac{y_k}{1+y_k},\nonumber \\
 &= \prod_{i =1}^n (1+x_i)(1+y_i) h\left( \frac{x_1}{1+x_1}, \ldots,\frac{x_n}{1+x_n}, \frac{y_1}{1+y_1}, \ldots, \frac{y_n}{1+y_n} \right). \label{eq:ftoh}
\end{align}
That is, we obtain the $f$-polynomial as a multiple of a certain specialization of the $h$-polynomial. Putting identity \eqref{eq:ftoh} the other way around, we can write
\begin{equation}\label{eq:htof}
 h(\mathbf{x},\mathbf{y}) = \prod_{i=1}^n (1-x_i)(1-y_i) f\left( \frac{x_1}{1-x_1}, \ldots,\frac{x_n}{1-x_n}, \frac{y_1}{1-y_1}, \ldots, \frac{y_n}{1-y_n} \right).
\end{equation}

Setting $x_j=x$ and $y_k=y$, we have 
\[
f(x,y) = \sum_{F \in \Xi} x^{l(F)}y^{r(F)},
\]
where if $F=(J,w,K)$, $l(F) = |S-J|$ and $r(F) = |S-K|$, which counts faces according to the number of ``left" and ``right" vertices. The $h$-polynomial specializes to
\[
 h(x,y) = \sum_{w \in W} x^{\des_L(w)} y^{\des_R(w)}.
\]
In other words, the polynomial $h(x,y)$ is a ``two-sided" Eulerian polynomial. This establishes the claim in part (5b) of Theorem \ref{thm:main}.

The usual $f$- and $h$-polynomials of $\Xi$ can be obtained by the further specialization of $x=y$:
\[
 f(x) = \sum_{F \in \Xi} x^{|F|}, \qquad h(x) = \sum_{w \in W} x^{\des_L(w) + \des_R(w)}.
\]

\begin{example} We can see in Figure \ref{fig:A2} that
\begin{align*}
 f(A_2;\mathbf{x},\mathbf{y}) &= 1 + (x_1 + x_2 + y_1 + y_2) \\
 &+ (x_1x_2 + 2x_1y_1 + 2x_1y_2 + 2x_2y_1 + 2x_2y_2 + y_1y_2) \\
  &+ (3x_1x_2y_1 + 3x_1x_2y_2 + 3x_1y_1y_2 + 3x_2y_1y_2) + 6x_1x_2y_1y_2,
\end{align*}
Which after a bit of rearranging equals
\begin{align*}
  &(1+x_1)(1+x_2)(1+y_1)(1+y_2) \\
  &+ x_1y_1(1+x_2)(1+y_2) + x_1y_2(1+x_2)(1+y_1) \\
  &+ x_2y_1(1+x_1)(1+y_2) + x_2y_2(1+x_1)(1+y_1)\\
  &+ x_1x_2y_1y_2.
\end{align*}

The elements of $A_2$ have the following descent sets,
\[
 \begin{array}{c | c | c}
  w & \Des_L(w) & \Des_R(w) \\
  \hline
  e & \emptyset & \emptyset\\
  s_1 & \{1\} & \{1\} \\
  s_2 & \{2\} & \{2\} \\
  s_1s_2 & \{1\} & \{2\}\\
  s_2s_1 & \{2\} & \{1\} \\
  s_1s_2s_1 = s_2 s_1s_2 & \{1,2\} & \{1,2\}  
 \end{array}
\]
so we can see that 
\begin{align*}
 f(A_2; \mathbf{x},\mathbf{y}) &= \sum_{w \in A_2} x_{\Des_L(w)}y_{\Des_R(w)}\prod_{i \in \Asc_L(w)} (1+x_i) \cdot \prod_{j \in \Asc_R(w)} (1+y_j),\\
  &= h\left(A_2; \frac{x_1}{1+x_1}, \frac{x_2}{1+x_2},\frac{y_1}{1+y_1},\frac{y_2}{1+y_2}\right).
\end{align*}

The coarser polynomials are then
\[ f(A_2; x, y) = 1 + 2(x+y) + x^2 + 8xy + y^2 + 6(x^2y + xy^2) + 6x^2y^2,\]
and
\[ h(A_2; x,y) = 1 + 4xy + x^2y^2.\]
\end{example}

\section{Two-sided Eulerian polynomials}\label{sec:poly}

With finite $W$, we can define the \emph{two-sided $W$-Eulerian polynomial}, denoted $W(x,y)$, as the joint distribution of left and right descents:
\[
W(x,y) =\sum_{w \in W} x^{\des_L(w)}y^{\des_R(w)} = \sum_{0\leq i,j\leq n} \el{W}{i,j} x^i y^j,
\] 
where $\el{W}{i,j}$ denotes the number of elements in $W$ with $i$ left descents and $j$ right descents. We call $\el{W}{i,j}$ a \emph{two-sided $W$-Eulerian number}. In Tables \ref{tab:WpolyABD} and \ref{tab:WpolyEF} we have the arrays of coefficients
\[
 \left[ \el{W}{i,j} \right]_{0\leq i,j \leq n},
\]
for some finite Coxeter groups of small rank. 

\begin{table}
\[
\begin{array}{| c  c |}
\hline  & \\
W & \left[ \el{W}{i,j} \right]_{0\leq i,j \leq n} \\
& \\
\hline
\hline & \\
A_n\, (n\geq 1): &  {\small \left[ \begin{array}{r r} 1 & 0 \\ 0 & 1\end{array} \right], \left[ \begin{array}{r rr} 1 & 0 &0 \\ 0 & 4 & 0 \\ 0 & 0 & 1 \end{array} \right], \left[ \begin{array}{r rrr} 1 & 0 &0 &0 \\ 0 & 10 & 1 & 0 \\ 0 & 1 & 10 & 0 \\ 0 & 0 & 0 & 1 \end{array} \right], \left[ \begin{array}{r rrrr} 1 & 0 &0 &0& 0\\ 0 & 20 & 6 & 0 & 0\\ 0 & 6 & 54 & 6 & 0 \\ 0 & 0 & 6 & 20& 0 \\ 0 & 0 & 0 & 0 &1 \end{array} \right]} \\
& \\
\hline
& \\
B_n\, (n\geq 2): & {\small \left[ \begin{array}{r rr} 1 & 0 &0 \\ 0 & 6 & 0 \\ 0 & 0 & 1 \end{array} \right], \left[ \begin{array}{r rrr} 1 & 0 &0 &0 \\ 0 & 19 & 4 & 0 \\ 0 & 4 & 19 & 0 \\ 0 & 0 & 0 & 1 \end{array} \right], \left[ \begin{array}{r rrrr} 1 & 0 &0 &0& 0\\ 0 & 45 & 30 & 1 & 0\\ 0 & 30 & 170 & 30 & 0 \\ 0 & 1 & 30 & 45& 0 \\ 0 & 0 & 0 & 0 &1 \end{array} \right] }\\
& \\
\hline
& \\
D_n\, (n\geq 4): & {\small \left[ \begin{array}{r rrrr} 1 & 0 &0 &0& 0\\ 0 & 30 & 12 & 2 & 0\\ 0 & 12 & 78 & 12 & 0 \\ 0 & 2 & 12 & 30 & 0 \\ 0 & 0 & 0 & 0 &1 \end{array} \right], \left[ \begin{array}{r rrrrr} 1 & 0 &0 &0& 0 & 0\\ 0 & 69 & 69 & 18 & 1 & 0\\ 0 & 69 & 486 & 229 & 18 & 0 \\ 0 & 18 & 229 & 486 & 69 & 0 \\ 0 & 1 & 18 & 69 & 69 & 0 \\ 0 & 0 & 0 & 0 & 0 & 1 \end{array} \right]},\\
& \\
& {\small\left[ \begin{array}{rrrrrrr}  1 & 0 & 0 & 0 & 0 & 0 & 0 \\ 0 & 135 & 262 & 117 & 16 & 0 & 0 \\ 0 & 262 & 2433 & 2330 & 510 & 16 & 0 \\ 0 & 117 & 2330 & 5982 & 2330 & 117 & 0 \\ 0 & 16 & 510 & 2330 & 2433 & 262 & 0 \\ 0 & 0 & 16 & 117 & 262 & 135 & 0 \\ 0 & 0 & 0 & 0 & 0 & 0 & 1 \end{array} \right]}\\
& \\
\hline
\end{array}
\]
\caption{Two-sided Eulerian numbers of low rank for finite Coxeter groups of classical type (types $A_n, B_n, D_n$).}\label{tab:WpolyABD}
\end{table}

\begin{table}
\[
\begin{array}{| c  c |}
\hline  & \\
W & \left[ \gamma^{W}_{a,b} \right]_{0\leq 2a+b \leq n} \\
& \\
\hline
\hline & \\
A_n\, (n\geq 1): &  {\small \left[ \begin{array}{r r} 1 \end{array} \right], \left[ \begin{array}{r rr} 1 & 0 \\ 0 & 2 \end{array} \right], \left[ \begin{array}{r rrr} 1 & 0 \\ 0 & 7 \\ 0 & 1 \end{array} \right], \left[ \begin{array}{r rrrr} 1 & 0 &0 \\ 0 & 16 & 0 \\ 0 & 6 & 16  \end{array} \right]} \\
& \\
\hline
& \\
B_n\, (n\geq 2): & {\small \left[ \begin{array}{r rr} 1 & 0 \\ 0 & 4 \end{array} \right], \left[ \begin{array}{r rrr} 1 & 0 \\ 0 & 16  \\ 0 & 4  \end{array} \right], \left[ \begin{array}{r rrrr} 1 & 0 & 0  \\ 0 & 41 & 0  \\ 0 & 30 & 80  \\ 0 & 1 & 0  \end{array} \right] }\\
& \\
\hline
& \\
D_n\, (n\geq 4): & {\small \left[ \begin{array}{r rrrr} 1 & 0 & 0  \\ 0 & 26 & 0  \\ 0 & 12 & 16  \\ 0 & 2 & 0  \end{array} \right], \left[ \begin{array}{r rrrrr} 1 & 0 & 0  \\ 0 & 64 & 0  \\ 0 & 69 & 248 \\ 0 & 18 & 88  \\ 0 & 1 & 0  \end{array} \right]},{\small\left[ \begin{array}{rrrrrrr}  1 & 0 & 0 & 0  \\ 0 & 129 & 0 & 0  \\ 0 & 262 & 1668 & 0  \\ 0 & 117 & 1496 & 832 \\ 0 & 16 & 276 & 0  \end{array} \right]}\\
& \\
\hline
\end{array}
\]
\caption{The integers $\gamma^{W}_{a,b}$ of low rank for finite Coxeter groups of classical type (types $A_n, B_n, D_n$).}\label{tab:WpolyABD2}
\end{table}

\begin{table}
\[
\begin{array}{| c  c |}
\hline & \\
W & \left[ \el{W}{i,j} \right]_{0\leq i,j \leq n} \\
& \\
\hline
\hline & \\
E_6: &  {\small \left[ \begin{array}{rrr rrrr} 1 & 0 &0 &0& 0& 0 &0\\ 0 & 232 & 584 & 389 & 64& 3 & 0\\ 0 & 584 & 4785 & 5440 & 1310 & 64 & 0 \\ 0 & 389 & 5440 & 13270 & 5440 & 389 &0 \\ 0 & 64 & 1310 & 5440 & 4785 & 584 & 0 \\ 0 & 3 & 64 & 389 & 584 & 232 & 0 \\ 0 & 0 & 0 & 0 & 0 & 0 & 1 \end{array} \right]}\\
& \\
\hline
& \\
E_7: &
{\small \left[
\begin{array}{rrrrrrrr} 1& 0& 0& 0& 0& 0& 0& 0 \\ 0& 945& 5414& 7693& 3208& 367& 8& 0 \\ 0& 5414& 64905& 143036& 83491& 12756& 367& 0 \\ 0& 7693& 143036& 484551& 401936& 83491& 3208& 0 \\ 0& 3208& 83491& 401936& 484551& 143036& 7693& 0 \\ 0& 367& 12756& 83491& 143036& 64905& 5414& 0 \\ 0& 8& 367& 3208& 7693& 5414& 945& 0 \\ 0& 0& 0& 0& 0& 0& 0& 1
\end{array}
\right]}\\
& \\
\hline
& \\
E_8: & {\tiny \left[ \begin{array}{rrrrr rrrr} 1 & 0 & 0 & 0 & 0 & 0 & 0 & 0 & 0 \\ 0 & 8460 & 113241 & 338944 & 318372 & 94540 & 8103 & 92 & 0 \\ 0 & 113241 & 2348364 & 9509809 & 11520216 & 4360423 & 476192 & 8103 & 0 \\ 0 & 338944 & 9509809 & 48819660 & 72638788 & 33260660 & 4360423 & 94540 & 0 \\ 0 & 318372 & 11520216 & 72638788 & 131292998 & 72638788 & 11520216 & 318372 & 0 \\ 0 & 94540 & 4360423 & 33260660 & 72638788 & 48819660 & 9509809 & 338944 & 0 \\ 0 & 8103 & 476192 & 4360423 & 11520216 & 9509809 & 2348364 & 113241 & 0 \\ 0 & 92 & 8103 & 94540 & 318372 & 338944 & 113241 & 8460 & 0 \\ 0 & 0 & 0 & 0 & 0 & 0 & 0 & 0 & 1  \end{array} \right]} \\
& \\
\hline
& \\
F_4: & \left[ \begin{array}{rrr rrrr} 1 & 0 & 0 & 0 & 0 \\ 0 & 108 & 112 & 16 & 0 \\ 0 & 112 & 454 & 112 & 0 \\ 0 & 16 & 112 & 108 & 0 \\ 0 & 0 & 0 & 0 & 1 \end{array} \right]\\
& \\
\hline
\end{array}
\]
\caption{The two-sided Eulerian numbers for finite Coxeter groups of exceptional type.}\label{tab:WpolyEF}
\end{table}

\begin{table}
\[
\begin{array}{| c  c |}
\hline & \\
W & \left[ \gamma^{W}_{a,b} \right]_{0\leq 2a+b \leq n} \\
& \\
\hline
\hline & \\
E_6: &  {\small \left[ \begin{array}{rrr rrrr} 1 & 0 & 0 & 0 \\ 0 & 226 & 0 & 0  \\ 0 & 584 & 3088 & 0 \\ 0 & 389 & 3496 & 3104 \\ 0 & 64 & 520 & 0  \\ 0 & 3 & 0 & 0 \end{array} \right]}\\
& \\
\hline
& \\
E_7: &
{\small \left[
\begin{array}{rrrrrrrr} 1 & 0 & 0 & 0 \\ 0 & 938 & 0 & 0  \\ 0 & 5414 & 44808 & 0  \\ 0 & 7693 & 111756 & 174464  \\ 0 & 3208 & 58944 & 107712 \\ 0 & 367 & 6300 & 0  \\ 0 & 8 & 0 & 0 \end{array}
\right]}\\
& \\
\hline
& \\
E_8: & {\small \left[ \begin{array}{rrrrr rrrr} 1 & 0 & 0 & 0 & 0  \\ 0 & 8452 & 0 & 0 & 0  \\ 0 & 113241 & 1619736 & 0 & 0  \\ 0 & 338944 & 7988488 & 19362528 & 0  \\ 0 & 318372 & 9786280 & 34500160 & 17111296  \\ 0 & 94540 & 3364792 & 9750496 & 0 \\ 0 & 8103 & 286560 & 0 & 0  \\ 0 & 92 & 0 & 0 & 0 \end{array} \right]} \\
& \\
\hline
& \\
F_4: & \left[ \begin{array}{rrr rrrr} 1 & 0 & 0 & 0 \\ 0 & 104 & 0 & 0 \\ 0 & 112 & 208 & 0 \\ 0 & 16 & 0 & 0 \end{array} \right]\\
& \\
\hline
\end{array}
\]
\caption{The integers $\gamma^{W}_{i,j}$ of low rank for finite Coxeter groups of exceptional type.}\label{tab:WpolyEF2}
\end{table}

In type $A_n$, these numbers were first studied by Carlitz et al. \cite{Carlitz}, but have been recently revisited by the author \cite{Petersen} and Visontai \cite{Visontai} (who also discussed type $B_n$ Coxeter groups). The recent interest in these polynomials stems from a conjecture of Gessel that we will now describe and generalize from the symmetric group to all finite Coxeter groups. 

To state Gessel's conjecture, one must first make note of certain symmetries in the two-sided Eulerian numbers. Notice that the map $w \mapsto w^{-1}$ swaps left and right descents, $\Des_L(w) = \Des_R(w^{-1})$, so we get symmetry in $i$ and $j$:
\begin{equation}\label{eq:ijsym}
 \el{W}{i,j} = \el{W}{j,i}.
\end{equation}
Also recall that left multiplication by the long element $w_0$ complements the right descent set:
\[
 \Des_R(w_0 w) = S- \Des_R(w),
\]
while conjugation by $w_0$ conjugates the elements of the right descent set:
\[
 \Des_R(w_0 w w_0) = \{ w_0sw_0 : s \in \Des_R(w)\} = w_0\Des_R(w) w_0.
\] 
These facts follow, e.g., from \cite[Section 2.3]{BjB}.

Taken together, we see that left multiplication by $w_0$ complements the conjugate of the left descent set:
\begin{align*}
\Des_L(w_0w) &=\Des_R(w^{-1}w_0),\\
 &=\Des_R(w_0(w_0w^{-1}w_0)),\\
 &=S-\Des_R(w_0w^{-1}w_0),\\
 &=S-w_0\Des_R(w^{-1})w_0,\\
 &=S-w_0\Des_L(w) w_0.
\end{align*}

%

Hence we have $\des_L(w_0w) = n-\des_L(w)$ and $\des_R(w_0w) = n- \des_R(w)$, implying the following symmetry:
\begin{equation}\label{eq:n-isym}
 \el{W}{i,j} = \el{W}{n-i,n-j}.
\end{equation}
Phrasing symmetries \eqref{eq:ijsym} and \eqref{eq:n-isym} in terms of generating functions, we have the following observation about the two-sided $W$-Eulerian polynomials.

\begin{obs}
For any finite Coxeter group $W$ of rank $n$,
\begin{enumerate}
\item $W(x,y) = W(y,x)$, and
\item $W(x,y)=x^ny^nW(1/x,1/y)$.
\end{enumerate}
\end{obs}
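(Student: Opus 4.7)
The plan is to translate each identity directly from the coefficient-level symmetries \eqref{eq:ijsym} and \eqref{eq:n-isym}, which are already established at the element level in the paragraph immediately preceding the observation. The first comes from the bijection $w \mapsto w^{-1}$, which swaps left and right descent sets, while the second comes from left multiplication by the longest element $w_0$, together with the conjugation computation showing $\Des_L(w_0 w) = S - w_0 \Des_L(w) w_0$. Both are purely elementwise facts about the descent statistics, and neither uses anything about the complex $\Xi$.

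For part (1), I would expand $W(x,y) = \sum_{i,j} \el{W}{i,j} x^i y^j$, apply $\el{W}{i,j} = \el{W}{j,i}$ term-by-term, and then swap the roles of the summation indices $i$ and $j$ to recognize the resulting double sum as $W(y,x)$. For part (2), I would similarly expand $W(x,y)$ and substitute $\el{W}{i,j} = \el{W}{n-i, n-j}$ from \eqref{eq:n-isym}; reindexing via $i \mapsto n-i$ and $j \mapsto n-j$ converts each monomial $x^i y^j$ into $x^{n-i} y^{n-j}$, and factoring out $x^n y^n$ yields precisely $x^n y^n\, W(1/x, 1/y)$.

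There is no real obstacle here: both coefficient symmetries were derived above, so the observation is essentially a packaging of those facts as generating-function identities. The only bookkeeping of note is keeping the exponent shift consistent in (2), and noting that the reindexing introduces no boundary terms because $\el{W}{i,j}$ vanishes outside the range $0 \leq i, j \leq n$, so the sum is automatically symmetric under $i \mapsto n-i$ and $j \mapsto n-j$.
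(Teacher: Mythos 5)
Your proposal is correct and matches the paper's approach exactly: the paper derives the coefficient symmetries $\el{W}{i,j} = \el{W}{j,i}$ and $\el{W}{i,j} = \el{W}{n-i,n-j}$ from $w \mapsto w^{-1}$ and left multiplication by $w_0$, then states the observation as the generating-function repackaging of those two facts. Your reindexing argument is the intended (and essentially the only) bookkeeping step.
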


Integer polynomials that possess symmetries (1) and (2) have an expansion in the following basis:
\[
 \Gamma_n=\{ (x y)^a (x+y)^b (1+xy)^{n-2a-b} \}_{0\leq 2a + b \leq n}.
\]
The generalized Gessel conjecture is that the two-sided Eulerian polynomials expand positively in this basis.

\begin{conj}[Generalized Gessel's conjecture]\label{conj:gamma}
For any finite Coxeter group $W$ of rank $n$, there exist nonnegative integers $\gamma_{a,b}^W$ such that
\[
 W(x,y) = \sum_{0\leq 2a+b \leq n} \gamma_{a,b}^W (xy)^a(x+y)^b(1+xy)^{n-2a-b}.
\]
\end{conj}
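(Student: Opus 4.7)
My plan attacks Conjecture \ref{conj:gamma} in three stages: establish that the $\gamma^W_{a,b}$ are well-defined integers, tackle type $A$ combinatorially, and attempt a uniform topological argument for general $W$.

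First, I would verify that $\Gamma_n$ is a $\mathbb{Z}$-basis for the lattice of polynomials $p(x,y)\in\mathbb{Z}[x,y]$ of bidegree at most $(n,n)$ satisfying $p(x,y)=p(y,x)$ and $p(x,y)=x^n y^n p(1/x,1/y)$. Linear independence follows from a triangularity argument: under a suitable term order each basis element $(xy)^a(x+y)^b(1+xy)^{n-2a-b}$ has a distinguished leading monomial (e.g.\ $x^{a+b}y^a$), and an orbit count for $(i,j)\in\{0,\ldots,n\}^2$ under the $\mathbb{Z}_2\times\mathbb{Z}_2$-action generated by $(i,j)\mapsto(j,i)$ and $(i,j)\mapsto(n-i,n-j)$ confirms $|\Gamma_n|$ matches the dimension. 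The symmetries in the observation preceding the conjecture then place $W(x,y)$ in the $\mathbb{Z}$-span of $\Gamma_n$ and give an explicit alternating-sum formula for each $\gamma^W_{a,b}$.

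Second, for type $A$ the natural attempt is to refine the Foata--Strehl ``valley-hopping'' $\mathbb{Z}_2^k$-action on $S_n$, which witnesses the one-sided Eulerian gamma expansion. One would define compatible actions on $w$ and on $w^{-1}$ whose joint orbits each contribute a single basis element $(xy)^a(x+y)^b(1+xy)^{n-2a-b}$ to $A_n(x,y)$, with the orbit size reflecting the joint peak/valley structure on both sides. Alternatively, one exploits the contingency-table model of Section \ref{sec:contingency}: row- and column-descent statistics on tables are symmetric by construction, and a suitable involution on the set of tables could witness the gamma expansion directly. Third, for general finite $W$ the approach is topological. The complex $\Xi$ is balanced with a $2n$-coloring that splits into ``left'' and ``right'' vertices, and Corollary \ref{cor:sphere} shows its barycentric subdivision $\Xi'$ is a flag shellable sphere. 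Flag spheres are the setting of Gal's univariate gamma-positivity conjecture; the plan is to develop a \emph{bivariate} balanced refinement, constructing a symmetric boolean decomposition of $\Xi$ refining the partition in \eqref{eq:partition} by pairing intervals $[R_w,F_w]$ so that each block's contribution to $W(x,y)$ collapses into a single $\Gamma_n$-basis element, with $\gamma^W_{a,b}$ then counting the blocks of shape $(a,b)$.

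The principal obstacle is precisely the bivariate refinement. Even Gessel's original type-$A$ conjecture is delicate, and known one-sided techniques do not automatically lift: a valley-hopping involution that controls $\Des_R(w)$ typically fails to respect $\Des_L(w)=\Des_R(w^{-1})$, so the product of two such involutions can destroy the orbit structure needed. Identifying a single uniform structure -- a group action on $W$, an involution on contingency tables in type $A$, or a balanced-flag decomposition of $\Xi$ in general -- on which both descent sets behave as genuine coordinates is the crux, and is where essentially all the effort would have to be concentrated.
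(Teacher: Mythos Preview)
This statement is presented in the paper as an open conjecture, not a theorem; the paper offers no proof. The surrounding remark records only that Lin has settled the type-$A$ case by an induction on a recurrence of Visontai, and that the remaining irreducible types have been verified computationally through rank $10$. There is therefore no ``paper's own proof'' to compare your proposal against.

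Your write-up is, as you explicitly frame it, a research strategy rather than a proof. The first stage---that $\Gamma_n$ is a $\mathbb{Z}$-basis for the lattice of integer polynomials of bidegree at most $(n,n)$ satisfying the two symmetries, so that the $\gamma^W_{a,b}$ are well-defined integers---is routine and essentially what the paper asserts just before stating the conjecture. The remaining stages are reasonable lines of attack but do not constitute a proof. Lifting Foata--Strehl valley-hopping to control $\Des_R(w)$ and $\Des_R(w^{-1})$ simultaneously is precisely the obstruction that kept Gessel's original conjecture open for years, and you correctly identify it as the crux. The topological route via a bivariate balanced decomposition of $\Xi$ is even more speculative: the univariate analogue (Gal's conjecture) is already open for general flag spheres, so a bivariate refinement would require proving something at least as hard as an unresolved conjecture. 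In short, there is no gap to name because you do not claim a proof; your proposal is an honest outline of approaches to a problem the paper leaves open, and it aligns with the paper's own assessment of the state of affairs.
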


The integers $\gamma_{a,b}^W$ for $W$ of small rank are shown in Tables \ref{tab:WpolyABD2} and \ref{tab:WpolyEF2}.

\begin{rem}
In practice, traversing the group $W$ to compute the polynomial $W(x,y)$ is not very efficient, as the order of the group is roughly factorial in the rank. 

From Equation \eqref{eq:double} we know $f_{S-I,S-J}$ is the cardinality of the double quotient $|W_I\backslash W/W_J|$ and from \cite[Exercise 7.77a]{ec2} we can compute this cardinality with an inner product of trivial characters on the parabolic subgroups induced up to $W$. That is,
\[
|W_I\backslash W/W_J| = \left\langle \mathrm{ind}_{W_{I}}^{W}1_{W_{I}}, \mathrm{ind}_{W_{J}}^{W}1_{W_{J}} \right\rangle,
\]
where $1_{W_J}$ denotes the trivial character on $W_J$. Stembridge has a nice implementation of this character computation in Maple \cite{StM}. 

Having computed the numbers $f_{I,J}$ for all pairs of subsets $I, J \subseteq S$, we obtain the polynomial $f(\mathbf{x},\mathbf{y})$ and we can use Equation \eqref{eq:htof} to compute the polynomial $h(\mathbf{x},\mathbf{y})$, which then specializes to $W(x,y)$. To put it succinctly, we have
\[
 W(x,y) = \sum_{I,J \subseteq S} f_{I,J}x^{|I|}y^{|J|}(1-x)^{n-|I|}(1-y)^{n-|J|}.
\]

Roughly speaking, this method reduces the problem of computing $W(x,y)$ from that of traversing the $|W|$ elements of $W$ to one of traversing $4^n$ pairs of subsets.  The two-sided Eulerian numbers for $E_8$ were computed in about half an hour on a standard desktop machine in this manner.
\end{rem}

\begin{rem}
Very recently, the author was informed that Gessel's original conjecture (for $W=A_n = S_{n+1}$) was proved by Lin \cite{Lin}. The method of proof seems to be a careful induction argument using a recurrence for the $\gamma_{a,b}^{A_n}$ given by Visontai \cite{Visontai}. The other cases have been verified for small rank $(n\leq 10)$. Type $B_n$ is governed by similar combinatorics, so perhaps a similar induction proof can be found. In all cases, it would be nice to know what the $\gamma_{a,b}^W$ count.
\end{rem}

\section{Contingency tables}\label{sec:contingency}
 
Throughout this section we consider the special case where $W=S_n$ is the symmetric group. The generating set is $S=\{s_1,s_2,\ldots,s_{n-1}\}$, where $s_i$ is the $i$th adjacent transposition.

As shown in Diaconis and Gangolli \cite{DG}, for fixed $I$ and $J$ the double cosets $W_I w W_J$ are in bijection with arrays of nonnegative integers. (They attribute the idea to N. Bergeron.) To see how this connection is made, we draw double cosets as diagrams of ``balls in boxes." First, we draw permutations as two-dimensional arrays, with a ball in column $i$ (left to right), row $j$ (bottom to top), if $w(i)=j$, then we insert some vertical and horizontal bars in gaps between balls. The group $S_n$ acts on the left by permuting rows; it acts on the right by permuting columns.

For example, $w = 7142536$ is drawn in Figure \ref{fig:double}. To indicate a parabolic double coset $W_IwW_J$, we draw solid horizontal bars in gaps that correspond to $S-I$ and solid vertical bars in gaps that correspond to $S-J$. In Figure \ref{fig:double}, $I=\{s_1,s_2,s_3, s_5\}$ and $J=\{s_2,s_3,s_6\}$. We can get all elements of $W_IwW_J$ by swapping columns and rows that are not separated by a solid bar. Notice that the balls cannot leave the boxes formed by the bars. 

The minimal representative for the double coset corresponds to the permutation obtained by sorting the balls in increasing order from left to right and from bottom to top. The minimal representative for the coset illustrated in Figure \ref{fig:double} would then be $u = 7123546$. Notice that both the right descents and left descents of $u$ occur in barred positions. 

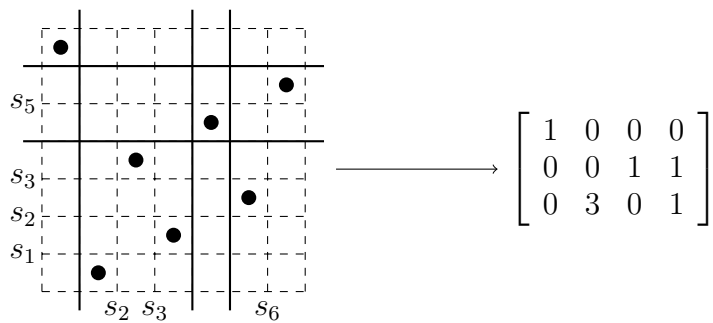
\begin{figure}
\[
\begin{tikzpicture}
\draw (0,0) node (a) {
 \begin{tikzpicture}[scale=.25]
  \draw[dashed] (0,0) grid[step=2] (14,14); 
  \draw (1,13) node[circle,fill=black,inner sep =2] {};
  \draw (3,1) node[circle,fill=black,inner sep =2] {};
  \draw (5,7) node[circle,fill=black,inner sep =2] {};
  \draw (7,3) node[circle,fill=black,inner sep =2] {};
  \draw (9,9) node[circle,fill=black,inner sep =2] {};
  \draw (11,5) node[circle,fill=black,inner sep =2] {};
  \draw (13,11) node[circle,fill=black,inner sep =2] {};
  \draw[thick] (2,-1)--(2,15);
  \draw[thick] (8,-1)--(8,15);
  \draw[thick] (10,-1)--(10,15);
  \draw[thick] (-1,8)--(15,8);
  \draw[thick] (-1,12)--(15,12);
  \draw (4,-1) node {$s_2$};
  \draw (6,-1) node {$s_3$};
  \draw (12,-1) node {$s_6$};
  \draw (-1,2) node {$s_1$};
  \draw (-1,4) node {$s_2$};
  \draw (-1,6) node {$s_3$};
  \draw (-1,10) node {$s_5$};
 \end{tikzpicture}
 };
 \draw (6,0) node (b) {
 $\left[ \begin{array}{ r r r r} 1 & 0 & 0 &0\\ 0 & 0 & 1 & 1 \\ 0 & 3 & 0 & 1 \end{array}\right]$
 };
 \draw[->] (a)--(b);
 \end{tikzpicture}
\]
\caption{A double coset in $A_6$ mapping to a contingency table in $\Xi(7)$.}\label{fig:double}
\end{figure}

Given the diagram for a double coset as in Figure \ref{fig:double}, we can map the diagram to an array of nonnegative integers by merely counting the number of balls in each box. Let $\Xi(n)$ denote the set of all such arrays, which are known as \emph{two-way contingency tables}. More precisely, define $\Xi(n)$ to be the set of all nonnegative integer arrays whose entries sum to $n$ and whose row sums and column sums are positive.

To move up in the partial order, we refine our balls and boxes picture by inserting more bars. On the contingency table side, this means our arrays get more rows and columns. Each cover relation corresponds to adding or deleting a single bar, so rank is given by the total number of bars. A balls-in-boxes picture with $k$ horizontal bars and $l$ vertical bars will correspond to a $(k+1)\times(l+1)$ contingency table.

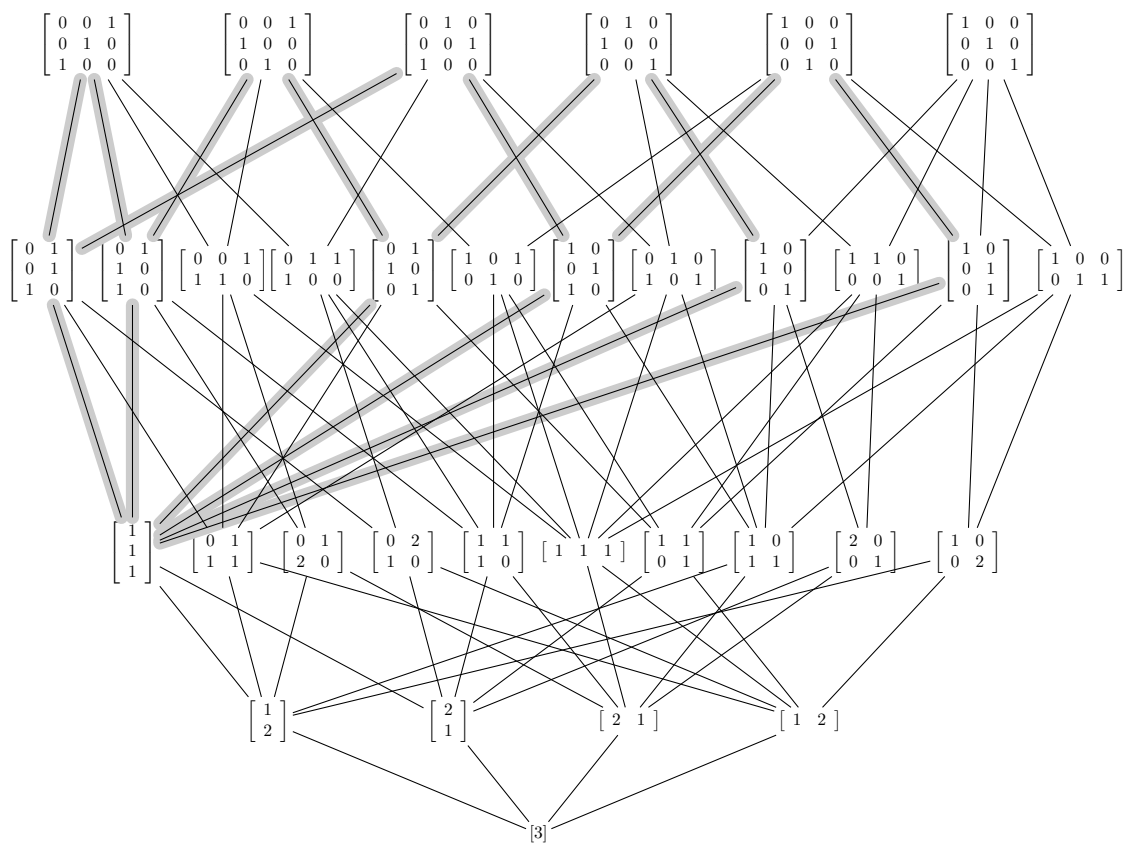
\begin{figure}
{\small
\[
 \begin{tikzpicture}[xscale=.6,yscale=3]
 \tikzstyle{state}=[fill=white,inner sep = 2,scale=.6];
 \tikzstyle{linet}=[line width=5, cap=round, color=white!80!black];
  \draw (0,.5) node[state] (12e12) {$[3]$};
  \draw (-6,1) node[state] (1e12) {$\left[ \begin{array}{c} 1 \\ 2 \end{array} \right]$};
  \draw (-2,1) node[state] (2e12) {$\left[ \begin{array}{c} 2 \\ 1 \end{array} \right]$};
  \draw (6,1) node[state] (12e2) {$\left[ \begin{array}{cc} 1 & 2 \end{array} \right]$};
  \draw (2,1) node[state] (12e1) {$\left[ \begin{array}{cc} 2 & 1 \end{array} \right]$};
  \draw (-9,1.75) node[state] (e12) {$\left[ \begin{array}{c} 1 \\ 1 \\ 1 \end{array} \right]$};
  \draw (-7,1.75) node[state] (1e2) {$\left[ \begin{array}{cc} 0 & 1 \\ 1 & 1\end{array} \right]$};
  \draw (-5,1.75) node[state] (1e1) {$\left[ \begin{array}{cc} 0 & 1 \\ 2 & 0 \end{array} \right]$};
  \draw (-3,1.75) node[state] (2e2) {$\left[ \begin{array}{cc} 0 & 2 \\ 1 & 0\end{array} \right]$};
  \draw (-1,1.75) node[state] (2e1) {$\left[ \begin{array}{cc} 1 & 1 \\ 1 & 0 \end{array} \right]$};
  \draw (1,1.75) node[state] (12e) {$\left[ \begin{array}{ccc} 1 & 1 & 1 \end{array} \right]$};
  \draw (3,1.75) node[state] (2s12) {$\left[ \begin{array}{cc} 1 & 1 \\ 0 & 1 \end{array} \right]$};
  \draw (5,1.75) node[state] (1s21) {$\left[ \begin{array}{cc} 1 & 0 \\ 1 & 1 \end{array} \right]$};
  \draw (7.25,1.75) node[state] (2s1s21) {$\left[ \begin{array}{cc} 2 & 0 \\ 0 & 1 \end{array} \right]$};
  \draw (9.5,1.75) node[state] (1s2s12) {$\left[ \begin{array}{cc} 1 & 0 \\ 0 & 2 \end{array} \right]$};
  \draw (-11,3) node[state] (e2) {$\left[ \begin{array}{cc} 0 & 1 \\ 0 & 1 \\ 1 & 0 \end{array} \right]$};
  \draw (-9,3) node[state] (e1) {$\left[ \begin{array}{cc} 0 & 1 \\ 1 & 0 \\ 1 & 0 \end{array} \right]$};
  \draw (-7,3) node[state] (1e) {$\left[ \begin{array}{ccc} 0 & 0 & 1 \\ 1 & 1 & 0 \end{array} \right]$};
  \draw (-5,3) node[state] (2e) {$\left[ \begin{array}{ccc} 0 & 1 & 1 \\ 1 & 0 & 0 \end{array} \right]$};
  \draw (-3,3) node[state] (s12) {$\left[ \begin{array}{cc} 0 & 1 \\ 1 & 0 \\ 0 & 1 \end{array} \right]$};
  \draw (-1,3) node[state] (2s1) {$\left[ \begin{array}{ccc} 1 & 0 & 1 \\ 0 & 1 & 0 \end{array} \right]$};
  \draw (1,3) node[state] (s21) {$\left[ \begin{array}{cc} 1 & 0 \\ 0 & 1 \\ 1 & 0 \end{array} \right]$};
  \draw (3,3) node[state] (1s2) {$\left[ \begin{array}{ccc} 0 & 1 & 0 \\ 1 & 0 & 1 \end{array} \right]$};
  \draw (5.25,3) node[state] (s1s21) {$\left[ \begin{array}{cc} 1 & 0 \\ 1 & 0 \\ 0 & 1 \end{array} \right]$};
  \draw (7.5,3) node[state] (2s1s2) {$\left[ \begin{array}{ccc} 1 & 1 & 0 \\ 0 & 0 & 1 \end{array} \right]$};
  \draw (9.75,3) node[state] (s2s12) {$\left[ \begin{array}{cc} 1 & 0 \\ 0 & 1 \\ 0 & 1 \end{array} \right]$};
  \draw (12,3) node[state] (1s2s1) {$\left[ \begin{array}{ccc} 1 & 0 & 0 \\ 0 & 1 & 1 \end{array} \right]$};
  \draw (-10,4) node[state] (e) {$\left[ \begin{array}{ccc} 0 & 0 & 1 \\ 0 & 1 & 0 \\ 1 & 0 & 0  \end{array} \right]$};
  \draw (-6,4) node[state] (s1) {$\left[ \begin{array}{ccc} 0 & 0 & 1 \\ 1 & 0 & 0 \\ 0 & 1 & 0 \end{array} \right]$};
  \draw (-2,4) node[state] (s2) {$\left[ \begin{array}{ccc} 0 & 1 & 0 \\ 0 & 0 & 1 \\ 1 & 0 & 0 \end{array} \right]$};
  \draw (2,4) node[state] (s1s2) {$\left[ \begin{array}{ccc} 0 & 1 & 0 \\ 1 & 0 & 0 \\ 0 & 0 & 1 \end{array} \right]$};
  \draw (6,4) node[state] (s2s1) {$\left[ \begin{array}{ccc} 1 & 0 & 0 \\ 0 & 0 & 1 \\ 0 & 1 & 0 \end{array} \right]$};
  \draw (10,4) node[state] (s1s2s1) {$\left[ \begin{array}{ccc} 1 & 0 & 0 \\ 0 & 1 & 0 \\ 0 & 0 & 1 \end{array} \right]$};
  \draw[linet] (e)--(e2);
  \draw[linet] (e)--(e1);
  \draw[linet] (e2)--(e12);
  \draw[linet] (e1)--(e12);
  \draw[linet] (e12)--(s12);
  \draw[linet] (s1)--(e1);
  \draw[linet] (s2)--(e2);
  \draw[linet] (s12)--(s1s2);
  \draw[linet] (e12)--(s21);
  \draw[linet] (s21)--(s2s1);
  \draw[linet] (e12)--(s2s12);
  \draw[linet] (e12)--(s1s21);
  \draw (e)--(e2);
  \draw (e)--(e1);
  \draw (e)--(1e);
  \draw (e)--(2e);
  \draw (e2)--(e12);
  \draw (e2)--(1e2);
  \draw (e2)--(2e2);
  \draw (e1)--(e12);
  \draw (e1)--(1e1);
  \draw (e1)--(2e1);
  \draw (1e)--(1e2);
  \draw (1e)--(1e1);
  \draw (1e)--(12e);
  \draw (2e)--(2e2);
  \draw (2e)--(2e1);
  \draw (2e)--(12e);
  \draw (e12)--(1e12);
  \draw (e12)--(2e12);
  \draw (1e2)--(1e12);
  \draw (1e2)--(12e2);
  \draw (1e1)--(1e12);
  \draw (1e1)--(12e1);
  \draw (2e2)--(2e12);
  \draw (2e2)--(12e2);
  \draw (2e1)--(2e12);
  \draw (2e1)--(12e1);
  \draw (12e)--(12e2);
  \draw (12e)--(12e1);
  \draw (12e12)--(1e12);
  \draw (12e12)--(2e12);
  \draw (12e12)--(12e2);
  \draw (12e12)--(12e1);
  \draw[linet] (s1)--(s12);
  \draw (s1)--(e1);
  \draw (s1)--(1e);
  \draw (s1)--(s12);
  \draw (s1)--(2s1);
  \draw (s12)--(e12);
  \draw (s12)--(1e2);
  \draw (s12)--(2s12);
  \draw (2s1)--(2e1);
  \draw (2s1)--(12e);
  \draw (2s1)--(2s12);
  \draw (2s12)--(2e12);
  \draw (2s12)--(12e2);
  \draw[linet] (s2)--(s21);
  \draw (s2)--(e2);
  \draw (s2)--(2e);
  \draw (s2)--(s21);
  \draw (s2)--(1s2);
  \draw (s21)--(e12);
  \draw (s21)--(2e1);
  \draw (s21)--(1s21);
  \draw (1s2)--(1e2);
  \draw (1s2)--(12e);
  \draw (1s2)--(1s21);
  \draw (1s21)--(1e12);
  \draw (1s21)--(12e1);
  \draw[linet] (s1s2)--(s1s21);
  \draw (s1s2)--(s12);
  \draw (s1s2)--(1s2);
  \draw (s1s2)--(s1s21);
  \draw (s1s2)--(2s1s2);
  \draw (s1s21)--(e12);
  \draw (s1s21)--(1s21);
  \draw (s1s21)--(2s1s21);
  \draw (2s1s2)--(12e);
  \draw (2s1s2)--(2s12);
  \draw (2s1s2)--(2s1s21);
  \draw (2s1s21)--(2e12);
  \draw (2s1s21)--(12e1);
  \draw[linet] (s2s1)--(s2s12);
  \draw (s2s1)--(s21);
  \draw (s2s1)--(2s1);
  \draw (s2s1)--(s2s12);
  \draw (s2s1)--(1s2s1);
  \draw (s2s12)--(e12);
  \draw (s2s12)--(2s12);
  \draw (s2s12)--(1s2s12);
  \draw (1s2s1)--(12e);
  \draw (1s2s1)--(1s21);
  \draw (1s2s1)--(1s2s12);
  \draw (1s2s12)--(1e12);
  \draw (1s2s12)--(12e2);
  \draw (s1s2s1)--(s1s21);
  \draw (s1s2s1)--(2s1s2);
  \draw (s1s2s1)--(s2s12);
  \draw (s1s2s1)--(1s2s1);
\end{tikzpicture}
\]
}
\caption{The partial order on contingency tables $\Xi(3)$ is isomorphic to the two-sided Coxeter complex $\Xi(A_2)$. Highlighted edges indicate the Coxeter complex $\Sigma(A_2)$.}\label{fig:S3}
\end{figure}

Notice that we can permute the balls before insertion, so more than one cover relation can arise from inserting the same bar. For example, using the balls and boxes diagram of Figure \ref{fig:double}, there are two covers that come from inserting a horizontal bar in the gap corresponding to $s_5$:
\[
  \begin{tikzpicture}[scale=.25, baseline =1.5cm]
  \draw[dashed] (0,0) grid[step=2] (14,14); 
  \draw (1,13) node[circle,fill=black,inner sep =2] {};
  \draw (3,1) node[circle,fill=black,inner sep =2] {};
  \draw (5,7) node[circle,fill=black,inner sep =2] {};
  \draw (7,3) node[circle,fill=black,inner sep =2] {};
  \draw (9,9) node[circle,fill=black,inner sep =2] {};
  \draw (11,5) node[circle,fill=black,inner sep =2] {};
  \draw (13,11) node[circle,fill=black,inner sep =2] {};
  \draw[thick] (2,-1)--(2,15);
  \draw[thick] (8,-1)--(8,15);
  \draw[thick] (10,-1)--(10,15);
  \draw[thick] (-1,8)--(15,8);
  \draw[thick] (-1,10)--(15,10);
  \draw[thick] (-1,12)--(15,12);
  \draw (4,-1) node {$s_2$};
  \draw (6,-1) node {$s_3$};
  \draw (12,-1) node {$s_6$};
  \draw (-1,2) node {$s_1$};
  \draw (-1,4) node {$s_2$};
  \draw (-1,6) node {$s_3$};
 \end{tikzpicture}
 \quad\mbox{ and }\quad
 \begin{tikzpicture}[scale=.25,baseline=1.5cm]
  \draw[dashed] (0,0) grid[step=2] (14,14); 
  \draw (1,13) node[circle,fill=black,inner sep =2] {};
  \draw (3,1) node[circle,fill=black,inner sep =2] {};
  \draw (5,7) node[circle,fill=black,inner sep =2] {};
  \draw (7,3) node[circle,fill=black,inner sep =2] {};
  \draw (9,11) node[circle,fill=black,inner sep =2] {};
  \draw (11,5) node[circle,fill=black,inner sep =2] {};
  \draw (13,9) node[circle,fill=black,inner sep =2] {};
  \draw[thick] (2,-1)--(2,15);
  \draw[thick] (8,-1)--(8,15);
  \draw[thick] (10,-1)--(10,15);
  \draw[thick] (-1,8)--(15,8);
  \draw[thick] (-1,10)--(15,10);
  \draw[thick] (-1,12)--(15,12);
  \draw (4,-1) node {$s_2$};
  \draw (6,-1) node {$s_3$};
  \draw (12,-1) node {$s_6$};
  \draw (-1,2) node {$s_1$};
  \draw (-1,4) node {$s_2$};
  \draw (-1,6) node {$s_3$};
 \end{tikzpicture},
\]
corresponding to
\[
\left[ \begin{array}{rrrr} 1& 0 & 0 & 0 \\ 0 & 0 & 0 & 1\\ 0 & 0 & 1 & 0 \\ 0 & 3 & 0 & 1 \end{array} \right] \quad \mbox{ and } \quad \left[ \begin{array}{rrrr} 1& 0 & 0 & 0 \\ 0 & 0 & 1 & 0\\ 0 & 0 & 0 & 1 \\ 0 & 3 & 0 & 1 \end{array} \right], 
\]
respectively.

Downward covers in the partial order correspond to removing a single bar from the balls in boxes picture, which therefore adds all the entries in two adjacent rows or two adjacent columns of the corresponding contingency tables. In Figure \ref{fig:covers} we see all the upper and lower covers of the table from Figure \ref{fig:double}. The reader might like to translate these arrays into pictures of balls in boxes. In Figure \ref{fig:S3} we see the full refinement order on $\Xi(3)$.

We finish by stating what should be clear at this point.

\begin{prp}
The two-sided Coxeter complex of the symmetric group $S_n$ is isomorphic to $\Xi(n)$ under refinement order.
\end{prp}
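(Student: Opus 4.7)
The plan is to explicitly construct the bijection $\varphi\colon \Xi(A_{n-1}) \to \Xi(n)$ suggested by the balls-in-boxes diagrams and then verify that it sends the order $\leq_\Xi$ to refinement. Given a triple $(I, w, J)$ with $w \in {}^IW^J$, represent $w$ as a permutation matrix with a ball at position $(i, w(i))$, identify each gap between consecutive rows (resp.\ columns) with the corresponding simple generator, insert a horizontal bar in each gap labeled by a generator in $S - I$ and a vertical bar in each gap labeled by a generator in $S - J$, and let $\varphi(I,w,J)$ be the nonnegative integer array whose $(p,q)$-entry counts the balls in the $(p,q)$-box. Since every row-block and every column-block of the grid contains at least one of the $n$ balls, the row and column sums are positive and sum to $n$, so the output lies in $\Xi(n)$.

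To see that $\varphi$ is a bijection, I would construct its inverse as follows. Given a contingency table $T$ of shape $k \times \ell$, read off $S - I$ from the partial sums of the row sums of $T$ (these determine the horizontal bar positions, hence $I$) and similarly $S - J$ from the column sums. To recover the minimal representative $w \in {}^IW^J$, fill each box of the balls-in-boxes diagram with the prescribed number of balls placed in the unique sorted configuration (reading up and to the right). Sorting within each box is exactly what ensures $\Des_L(w) \subseteq S - I$ and $\Des_R(w) \subseteq S - J$, which by Lemma \ref{lem:minrep} characterizes the unique minimal representative of $W_I w W_J$. Thus the inverse is well-defined, and well-definedness of $\varphi$ itself follows because both $W_I$ acting on the left and $W_J$ acting on the right permute balls only within a given row-block or column-block, so the ball counts depend only on the double coset $W_IwW_J$ and on $(I, J)$.

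Finally I would translate the three conditions defining $\leq_\Xi$ into the language of tables. The conditions $I \supseteq I'$ and $J \supseteq J'$ say that the bar configuration of $(I', w', J')$ contains all bars of $(I, w, J)$ and possibly more, i.e., the grid of $\varphi(I', w', J')$ is a refinement of the grid of $\varphi(I, w, J)$. The remaining condition $W_I w W_J \supseteq W_{I'} w' W_{J'}$ is equivalent, once the first two hold, to the statement that every element of the finer double coset occupies the same coarse boxes as $w$, which is precisely the statement that summing the entries of $\varphi(I', w', J')$ within each coarse box recovers the corresponding entry of $\varphi(I, w, J)$. Together these conditions are exactly refinement of contingency tables.

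The only real subtlety, and the main step to execute carefully, is the third condition above: translating double-coset containment into the entry-summing refinement condition. This amounts to checking that when one inserts an additional horizontal bar at the gap labeled $s \in I$ (passing from the coarser $I$ to a finer $I' = I - \{s\}$), the elements of the resulting smaller double coset $W_{I'}w'W_J$ that lie inside $W_IwW_J$ are exactly those whose balls-in-boxes picture refines the original one; this in turn is a direct consequence of the fact that $W_I = \langle W_{I'}, s\rangle$ is generated by the finer parabolic together with the single generator crossing the new bar. Once this is verified, the partial order identification is complete and the proposition follows.
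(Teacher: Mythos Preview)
Your proposal is correct and follows the same approach as the paper. In fact, the paper does not give a separate proof of this proposition at all: it simply says ``We finish by stating what should be clear at this point,'' treating the result as a summary of the balls-in-boxes discussion that precedes it. Your write-up is a faithful formalization of that discussion---the bijection via counting balls in boxes, the inverse via sorting within boxes to recover the minimal representative, and the identification of $\leq_\Xi$ with refinement via insertion and deletion of bars are exactly the ingredients the paper lays out informally in Section~\ref{sec:contingency}.
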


\begin{figure}
\[
\begin{tikzpicture}[rotate=-90, xscale=.65,yscale=4]
\coordinate (u1) at (-11,1);
\coordinate (u2) at (-9,1);
\coordinate (u3) at (-7,1);
\coordinate (u4) at (-5,1);
\coordinate (u5) at (-3,1);
\coordinate (u6) at (-1,1);
\coordinate (u7) at (1,1);
\coordinate (u8) at (3,1);
\coordinate (u9) at (5,1);
\coordinate (u10) at (7,1);
\coordinate (u11) at (9,1);
\coordinate (u12) at (11,1);
\coordinate (d1) at (-6,-1);
\coordinate (d2) at (-3,-1);
\coordinate (d3) at (0,-1);
\coordinate (d4) at (3,-1);
\coordinate (d5) at (6,-1);
\coordinate (o) at (0,0);
\draw (o)--(u1);
\draw (o)--(u2);
\draw (o)--(u3);
\draw (o)--(u4);
\draw (o)--(u5);
\draw (o)--(u6);
\draw (o)--(u7);
\draw (o)--(u8);
\draw (o)--(u9);
\draw (o)--(u10);
\draw (o)--(u11);
\draw (o)--(u12);
\draw (o)--(d1);
\draw (o)--(d2);
\draw (o)--(d3);
\draw (o)--(d4);
\draw (o)--(d5);
\draw (o) node[scale=.5,fill=white,inner sep = 0] { $\left[ \begin{array}{ r r r r} 1 & 0 & 0 &0\\ 0 & 0 & 1 & 1 \\ 0 & 3 & 0 & 1 \end{array}\right]$};
\draw (u1) node[scale=.5,fill=white,inner sep = 0, right] { $\left[ \begin{array}{ r r r r} 1 & 0 & 0 &0\\ 0 & 0 & 1 & 0 \\ 0 & 0 & 0 & 1 \\ 0 & 3 & 0 & 1 \end{array}\right]$};
\draw (u2) node[scale=.5,fill=white,inner sep = 0, right] { $\left[ \begin{array}{ r r r r} 1 & 0 & 0 &0\\ 0 & 0 & 0 & 1 \\ 0 & 0 & 1 & 0 \\ 0 & 3 & 0 & 1 \end{array}\right]$};
\draw (u3) node[scale=.5,fill=white,inner sep = 0, right] { $\left[ \begin{array}{ r r r r} 1 & 0 & 0 &0\\ 0 & 0 & 1 & 1 \\ 0 & 0 & 0 & 1 \\ 0 & 3 & 0 & 0 \end{array}\right]$};
\draw (u4) node[scale=.5,fill=white,inner sep = 0, right] { $\left[ \begin{array}{ r r r r} 1 & 0 & 0 &0\\ 0 & 0 & 1 & 1 \\ 0 & 1 & 0 & 0 \\ 0 & 2 & 0 & 1 \end{array}\right]$};
\draw (u5) node[scale=.5,fill=white,inner sep = 0, right] { $\left[ \begin{array}{ r r r r} 1 & 0 & 0 &0\\ 0 & 0 & 1 & 1 \\ 0 & 1 & 0 & 1 \\ 0 & 2 & 0 & 0 \end{array}\right]$};
\draw (u6) node[scale=.5,fill=white,inner sep = 0, right] { $\left[ \begin{array}{ r r r r} 1 & 0 & 0 &0\\ 0 & 0 & 1 & 1 \\ 0 & 2 & 0 & 0 \\ 0 & 1 & 0 & 1 \end{array}\right]$};
\draw (u7) node[scale=.5,fill=white,inner sep = 0, right] { $\left[ \begin{array}{ r r r r} 1 & 0 & 0 &0\\ 0 & 0 & 1 & 1 \\ 0 & 2 & 0 & 1 \\ 0 & 1 & 0 & 0 \end{array}\right]$};
\draw (u8) node[scale=.5,fill=white,inner sep = 0, right] { $\left[ \begin{array}{ r r r r} 1 & 0 & 0 &0\\ 0 & 0 & 1 & 1 \\ 0 & 3 & 0 & 0 \\ 0 & 0 & 0 & 1 \end{array}\right]$};
\draw (u9) node[scale=.5,fill=white,inner sep = 0, right] { $\left[ \begin{array}{ r r r r r} 1 & 0 & 0 & 0 &0\\ 0 & 0 & 0 & 1 & 1 \\ 0 & 1 & 2 & 0 &  1 \end{array}\right]$};
\draw (u10) node[scale=.5,fill=white,inner sep = 0, right] { $\left[ \begin{array}{ r r r r r} 1 & 0 & 0 & 0 &0\\ 0 & 0 & 0 & 1 & 1 \\ 0 & 2 & 1 & 0 &  1 \end{array}\right]$};
\draw (u11) node[scale=.5,fill=white,inner sep = 0, right] { $\left[ \begin{array}{ r r r r r} 1 & 0 & 0 & 0 &0\\ 0 & 0 & 1 & 0 & 1 \\ 0 & 3 & 0 & 1 &  0 \end{array}\right]$};
\draw (u12) node[scale=.5,fill=white,inner sep = 0, right] { $\left[ \begin{array}{ r r r r r} 1 & 0 & 0 & 0 &0\\ 0 & 0 & 1 & 1 & 0 \\ 0 & 3 & 0 & 0 &  1 \end{array}\right]$};
\draw (d1) node[scale=.5,fill=white,inner sep = 0, left] { $\left[ \begin{array}{ r r r r } 1 & 0 & 1& 1\\ 0 & 3 & 0 & 1  \end{array}\right]$};
\draw (d2) node[scale=.5,fill=white,inner sep = 0, left] { $\left[ \begin{array}{ r r r r } 1 & 0 & 0& 0 \\ 0 & 3 & 1 & 2  \end{array}\right]$};
\draw (d3) node[scale=.5,fill=white,inner sep = 0, left] { $\left[ \begin{array}{ r r r  } 1 & 0 & 0 \\ 0 & 1 & 1 \\ 3 & 0 & 1  \end{array}\right]$};
\draw (d4) node[scale=.5,fill=white,inner sep = 0, left] { $\left[ \begin{array}{ r r r  } 1 & 0 & 0 \\ 0 & 1 & 1 \\ 0 & 3 & 1  \end{array}\right]$};
\draw (d5) node[scale=.5,fill=white,inner sep = 0, left] { $\left[ \begin{array}{ r r r  } 1 & 0 & 0 \\ 0 & 0 & 2 \\ 0 & 3 & 1  \end{array}\right]$};
\end{tikzpicture}
\]
\caption{The upper and lower covers of an element of $\Xi(7)$. (The order moves left to right.)}\label{fig:covers}
\end{figure}
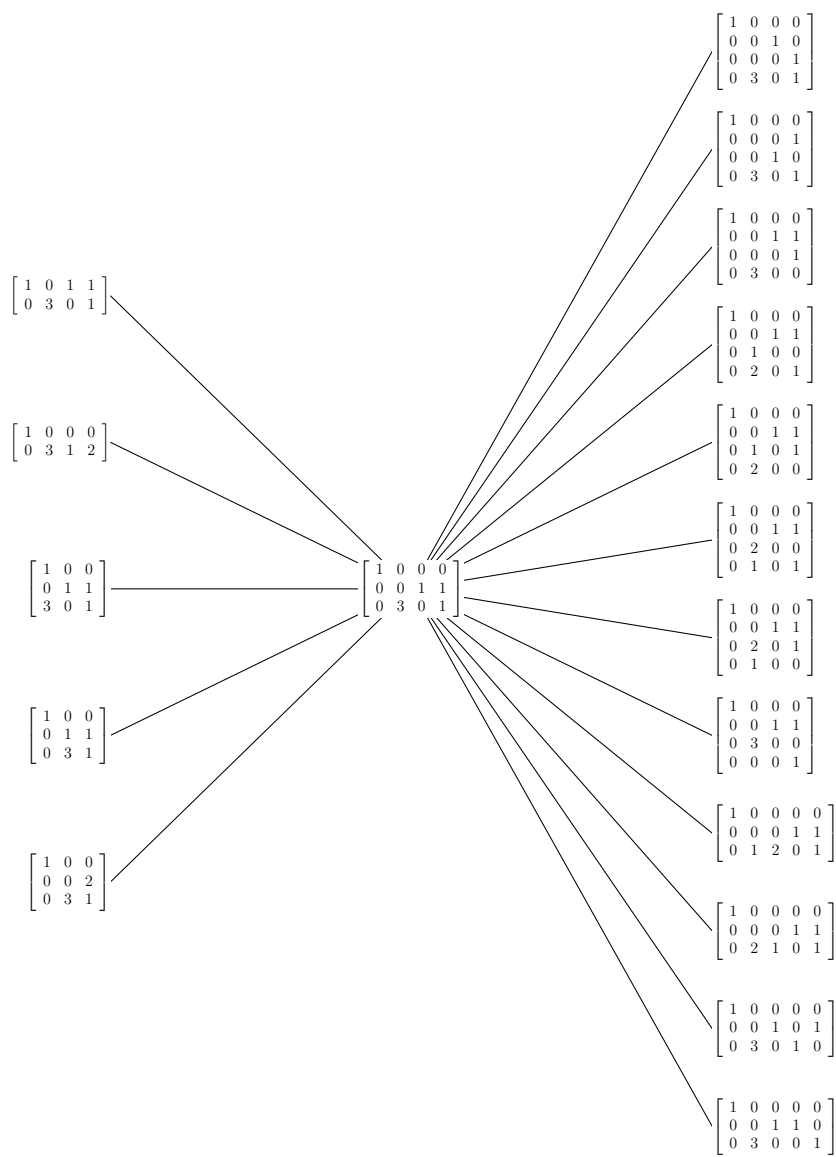

It is well-known that the faces of the Coxeter complex for the symmetric group are modeled by ordered set partitions of $[n]$. Ordered set partitions are in bijection with contingency tables that have $n$ rows (or by those with $n$ columns). To see the correspondence, we simply record, from left to right in each column, the rows that have nonzero entries (counting from bottom to top). For example, the following array corresponds to the ordered set partition $(\{4,5\},\{3,6\},\{1\},\{2\})$:
\[
 \left[\begin{array}{rrrr} 0 & 1 & 0 & 0 \\ 1& 0 & 0 & 0 \\ 1 & 0 & 0 & 0 \\ 0 & 1 & 0 & 0 \\ 0 & 0 & 0 & 1 \\ 0 & 0 & 1 & 0 \end{array} \right].
\]

\begin{rem}
The dual of the type $A_n$ Coxeter complex is the \emph{permutahedron}, which plays an interesting role in the study of combinatorial Hopf algebras, such as the Malvenuto-Reutenauer algebra and the algebra of quasisymmetric functions. See work of Aguiar and Sottile, for example \cite{AS}. 

Suggestively, two-way contingency tables provide an indexing set for a bialgebra known as the set of \emph{matrix quasisymmetric functions}, which contains many well-known combinatorial bialgebras as subalgebras or quotients. See work of Duchamp, Hivert, and Thibon \cite[Section 5]{DHT}. It would be interesting to explore whether $\Xi(n)$ might play a role for the matrix quasisymmetric functions similar to the role the permutahedron plays for the Malvenuto-Reutenauer algebra.
\end{rem}

\begin{rem}
We finish this article by remarking that refinement ordering on contingency tables makes sense not only for two-way tables. A \emph{$k$-way contingency table} of $n$ objects is an array of nonnegative integers
\[
 A = [ a_{i_1,\ldots,i_k} ],
\]
such that the sum of the entries is $n$ and all \emph{marginal sums}
\[
 m_r = \sum_{i_1,\ldots,i_{j-1},i_{j+1},\ldots,i_k} a_{i_1,\ldots,i_{j-1},r,i_{j+1},\ldots,i_k},
\]
are positive. In practical terms, a contingency table involves the study of a population according to several criteria that partition the population, say gender versus age versus income. Requiring the marginal sums to be positive means each criterion is satisfied by at least one member of the population. This seems reasonable, for otherwise the criterion gives no information.

We can inductively define $k$-way contingency tables for $k>2$ by considering $(k-1)$-way tables whose entries are nonnegative integer vectors of the same size, such that when all the vectors with nonzero entries are put into the columns of an array they form a $2$-way table. Refinement order on $k$-way contingency tables whose entries sum to $n$ has maximal elements given by arrays whose marginal sums all equal to $1$. By induction we see there are $(n!)^{k-1}$ maximal tables.

For any $k$, let the set of $k$-way contingency tables whose entries sum to $n$ be denoted by $\Xi(k;n)$. It is not hard to check the partial ordering given by refinement is ranked and boolean, just as in the $2$-way case. (Downward covers are given by adding adjacent entries in some coordinate.) It seems reasonable to expect that we get a shelling order from any linear extension of some sort of natural analogue of two-sided weak order on the facets. If so, refinement ordering on the set of $k$-way contingency tables of $[n]$ defines a thin, shellable simplicial poset and the geometric realization of $\Xi(k;n)$ is a sphere.
\end{rem}

\end{document}